\numberwithin{equation}{section}
\DeclareSymbolFont{cyrletters}{OT2}{wncyr}{m}{n}
\DeclareMathSymbol{\Sha}{\mathalpha}{cyrletters}{"58}
\providecommand{\keywords}[1]{{2010}\textit{ Mathematics Subject Classification. }#1}
\newcommand{\leg}[2]{\genfrac{(}{)}{}{}{#1}{#2}}
\newcommand{\fpr}[2]{\genfrac{[}{]}{}{}{#1}{#2}_4}
\newcommand{\ZZ}{\mathbb{Z}}
\newcommand{\QQ}{\mathbb{Q}}
\newcommand{\OO}{\mathcal{O}}
\DeclareMathOperator{\CL}{Cl}
\newcommand\FF{\mathbb{F}}
\newcommand\pp{\mathfrak{p}}
\newcommand\PP{\mathfrak{P}}
\newcommand\qq{\mathfrak{q}}
\newcommand\rrr{\mathfrak{r}}
\newcommand\pt{\mathfrak{t}}
\DeclareMathOperator{\rk}{rk}
\DeclareMathOperator{\Gal}{Gal}
\DeclareMathOperator{\Norm}{\mathfrak{N}}
\DeclareMathOperator{\Frob}{Frob}
\DeclareMathOperator{\ord}{ord}
\newcommand\K{\mathscr{K}}
\newcommand\sP{\mathscr{P}}
\newcommand\sE{\mathscr{E}}
\newcommand\sL{\mathscr{L}}
\newcommand\sM{\mathscr{M}}
\newtheorem{theorem}{Theorem}
\newtheorem{lemma}{Lemma}[section]
\newtheorem{prop}[lemma]{Proposition}
\newtheorem{conjecture}{Conjecture}
\DeclareMathOperator{\rank}{rank}
\title{Kuroda's formula and arithmetic statistics}
\author{Stephanie Chan\thanks{Gower Street, London, WC1E 6BT, United Kingdom, stephanie.chan.16@ucl.ac.uk}}
\author{Djordjo Milovic\thanks{Gower Street, London, WC1E 6BT, United Kingdom, djordjo.milovic@ucl.ac.uk}}
\affil{Department of Mathematics, University College London}
\date{\today}
\begin{document}

\maketitle

\begin{abstract} 
Kuroda's formula relates the class number of a multi-quadratic number field $K$ to the class numbers of its quadratic subfields $k_i$. A key component in this formula is the unit group index $Q(K) = [\OO_{K}^{\times}: \prod_i\OO_{k_i}^{\times}]$. We study how $Q(K)$ behaves on average in certain natural families of totally real biquadratic fields $K$ parametrized by prime numbers.
\end{abstract}
\keywords{11R29, 11R45, 11R80}

\section{Introduction}
The purpose of this paper is to study Kuroda's class number formula \cite{Kuroda, Kubota53, Kubota56} for real biquadratic fields from the standpoint of arithmetic statistics. The formula is as follows: let $K$ be a normal, totally real extension of $\QQ$ with $\Gal(K/\QQ)$ isomorphic to the Klein four-group $C_2\times C_2$, let $k_1$, $k_2$, and $k_3$ be the three quadratic subfields of $K$, let $h(K)$ (resp., $h(k_i)$, $i = 1, 2, 3$) denote the largest power of $2$ dividing the class number of $K$ (resp., of $k_i$, $i = 1, 2, 3$), and let $Q(K)$ denote the unit group index defined as
\[
Q(K) = [\OO_{K}^{\times}: \OO_{k_1}^{\times}\OO_{k_2}^{\times}\OO_{k_3}^{\times}].
\]
Then
\begin{equation}\label{eq:Kuroda}
h(K) = \frac{1}{4}Q(K)h(k_1)h(k_2)h(k_3).
\end{equation}
In general, the index $Q(K)$ can be $1$, $2$, or $4$ \cite{Kuroda}. A particular choice of $K$ that is natural from the standpoint of Gauss's genus theory and that appears in the literature \cite{Sime1, Sime2, AM, BLS, Yue1, Yue2, YZ} is 
\[
K = \QQ(\sqrt{p}, \sqrt{d}),
\]
where $p$ is a prime number and $d$ is a positive squarefree integer coprime to $p$. With this choice of $K$, we can now ask more precise statistical questions pertaining to the arithmetic objects appearing in \eqref{eq:Kuroda}. For instance, if we fix a positive squarefree integer $d$ and $i\in \{1, 2, 4\}$, then we may wish to determine the natural density, if it exists, of prime numbers $p$ such that $Q(K) = i$. In analogy with numerous works on $2$-parts of class groups (see for instance \cite{CohnLag, CohnLag2, Ste1}), we may further inquire if there exists a governing field $M_d/\QQ$, not depending on $p$, such that $Q(K)$ is determined by the Frobenius conjugacy class of $p$ in the Galois group $\Gal(M_d/\QQ)$. 

Before stating our results, we first establish our notation. Given integers $d_1, \ldots, d_k$, let $\K_{d_1, \ldots, d_k}$ denote the multiquadratic field $\QQ(\sqrt{d_1}, \ldots, \sqrt{d_k})$, and let $\CL(d_1, \ldots, d_k)$ (resp.\ $\CL^+(d_1, \ldots, d_k)$) denote the $2$-part of the class group (resp.\ the $2$-part of the narrow class group) of $\K_{d_1, \ldots, d_k}$. Similarly, let $H_{d_1, \ldots, d_k}$ (resp., $H_{d_1, \ldots, d_k}^+$) denote the $2$-Hilbert class field (resp., the narrow $2$-Hilbert class field) of $\K_{d_1, \ldots, d_k}$, i.e., the maximal unramified at all primes (resp., at all finite primes) abelian $2$-power-degree extension of $\K_{d_1, \ldots, d_k}$. Hence, by class field theory, the Artin map induces canonical isomorphisms 
\[
\CL(d_1, \ldots, d_k)\cong \Gal(H_{d_1, \ldots, d_k}/\K_{d_1, \ldots, d_k})\quad\text{and}\quad\CL^+(d_1, \ldots, d_k)\cong \Gal(H^+_{d_1, \ldots, d_k}/\K_{d_1, \ldots, d_k}).
\]
Let $h(d_1, \ldots, d_k) = |\CL(d_1, \ldots, d_k)|$ and $h^+(d_1, \ldots, d_k) = |\CL^+(d_1, \ldots, d_k)|$. For a finite abelian group $G$, a prime number $\ell$, and an integer $k\geq 1$, we define the $\ell^k$-rank of $G$ to be the non-negative integer
\[
\rk_{\ell^k}G = \dim_{\FF_{\ell}}(\ell^{k-1}G/\ell^kG).
\]
For an integer $n\geq 1$, let $C_n$ denote the cyclic group of order $n$. Let $\OO_L$ denote the ring of integers of a number field $L$. Finally, throughout this paper we will let $d$ denote a positive squarefree integer having exactly $t$ distinct prime factors, we will let $p$ denote a prime number coprime to $d$, and we will let $m_{d, p}$ denote the number of primes dividing $d$ that split completely in $\K_{p}/\QQ$.

We aim to study the natural density of the fibers of the map $\phi_{d}: p\mapsto Q(\K_{d, p})$. Since the Chebotarev Density Theorem is a ready-made tool for studying densities of prime numbers, it is of particular interest to determine when the map $\phi_d$ is \textit{Frobenian}, i.e., when there exists a normal extension $M_d/\QQ$, called a \textit{governing field}, and a class function $\varphi_d: \Gal(M_d/\QQ)\rightarrow \{1, 2, 4\}$ such that 
$$
\phi_d(p) = \varphi_d(\Frob_{M_d/\QQ}(p))
$$
for all primes $p$ that are unramified in $M_d/\QQ$ (here $\Frob_{M_d/\QQ}(p)$ denotes the Frobenius conjugacy class of $p$ in the Galois group $\Gal(M_d/\QQ)$). We will now describe a case where we can prove that the map $\phi_d$ is indeed Frobenian and compute the density of the fibers of $\phi_d$.

Suppose that
\[
\rk_{2}\CL(d) = \rk_2\CL^+(d),
\]
which occurs if and only if $d$ has no prime factors congruent to $3$ modulo $4$, as well as if and only if the \textit{genus field} of $\K_{d}$, i.e., the subfield of $H^+_d$ that is invariant under $2\CL^+(d)$, is totally real. Further suppose that $p$ is congruent to $1$ modulo $4$ and that
\[
\rk_4\CL^+(d) = \rk_4\CL^+(dp) = 0.
\]
We then have
\[
h(d) = h^+(d) = 2^{t-1}, \quad h(p) = h^+(p) = 1, \quad\text{and}\quad h(dp) = h^+(dp) = 2^t,
\]
so that the formula \eqref{eq:Kuroda} becomes
\begin{equation}\label{eq:simplified}
h(d, p) = Q(\K_{d, p})\cdot 2^{2t-3}.
\end{equation}
We will first prove that $h^+(d, p) = 2^{2t-2}$, so that $Q(\K_{d, p}) = 2$ or $Q(\K_{d, p}) = 1$ depending on whether or not $H^+_{d, p}$ is totally real. 

\begin{theorem}\label{thm1}
With notation as above, assume that $\rk_2\CL(d)=\rk_2\CL^+(d)$ and $\rk_4\CL^+(d)=\rk_4\CL^+(dp)=0$. Then $\CL^+(d, p) \cong C_2^{2m_{d, p}}\times C_4^{t-m_{d, p}-1}$. In particular, $h^+(d, p) = 2^{2t-2}$, $Q(\K_{d, p})\in \{1, 2\}$, and $Q(\K_{d, p}) = 2$ if and only if $H^+_{d, p}$ is totally real.
\end{theorem}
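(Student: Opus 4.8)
The group $\CL^+(d,p)$ is a finite abelian $2$-group, so its isomorphism type is pinned down by the ranks $\rk_{2^k}\CL^+(d,p)$: the number of cyclic factors of order exactly $2^k$ is $\rk_{2^k}\CL^+(d,p)-\rk_{2^{k+1}}\CL^+(d,p)$. The plan is therefore to prove
\[
\rk_2\CL^+(d,p) = t+m_{d,p}-1,\qquad \rk_4\CL^+(d,p) = t-m_{d,p}-1,\qquad \rk_8\CL^+(d,p)=0 ,
\]
which forces $\CL^+(d,p)\cong C_2^{\,(t+m_{d,p}-1)-(t-m_{d,p}-1)}\times C_4^{\,t-m_{d,p}-1}=C_2^{2m_{d,p}}\times C_4^{\,t-m_{d,p}-1}$, a group of order $2^{2m_{d,p}}\cdot 4^{\,t-m_{d,p}-1}=2^{2t-2}$. (Here the hypothesis $\rk_4\CL^+(dp)=0$ forces $m_{d,p}\le t-1$, so that the exponents above are non-negative.)

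First I would record the subfield data. The hypotheses $\rk_4\CL^+(d)=\rk_4\CL^+(dp)=0$ say precisely that $\CL^+(d)$ and $\CL^+(dp)$ are elementary abelian; together with $h^+(d)=2^{t-1}$ and $h^+(dp)=2^{t}$ this gives $\CL^+(d)\cong C_2^{t-1}$ and $\CL^+(dp)\cong C_2^{t}$, while $\CL^+(p)=1$. I would then compute $\rk_2\CL^+(d,p)$ by genus theory. Writing $G=\Gal(\K_{d,p}/\QQ)\cong C_2\times C_2$ and applying the narrow ambiguous class number formula to the three quadratic layers $\K_{d,p}/k_i$ — feeding in the orders above and the splitting behaviour of the primes $q\mid d$ in $\K_{p}/\QQ$ — the $m_{d,p}$ primes that split completely each contribute one extra generator, producing $\rk_2\CL^+(d,p)=t+m_{d,p}-1$.

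The crux is the computation of the $4$-rank and the exclusion of classes of order $8$. Here I would use the $G$-module structure of $A:=\CL^+(d,p)$ together with the lift maps $j_i\colon\CL^+(k_i)\to A$ and the relative norms $N_{\K_{d,p}/k_i}\colon A\to\CL^+(k_i)$. Classes of order $4$ arise by \emph{linking}: a class that is ambiguous for one layer has a square lying in the image of a subfield class group, and whether that square is trivial is governed by R\'edei-symbol conditions among the primes dividing $dp$; the count of nontrivial links is $t-m_{d,p}-1$, giving $\rk_4 A=t-m_{d,p}-1$. Crucially, since $\CL^+(d)$ and $\CL^+(dp)$ are elementary abelian, every square in $A$ is captured by these subfield groups, on which multiplication by $2$ vanishes; hence $2A$ is itself elementary abelian and $\rk_8 A=0$. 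Making the linking relations precise, and checking that the two $\rk_4$-hypotheses genuinely forbid an order-$8$ class, is the main obstacle.

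Finally I would assemble the conclusions. The three ranks give $\CL^+(d,p)\cong C_2^{2m_{d,p}}\times C_4^{\,t-m_{d,p}-1}$ and in particular $h^+(d,p)=2^{2t-2}$. Comparing with Kuroda's formula in the form \eqref{eq:simplified}, the natural surjection $\CL^+(d,p)\twoheadrightarrow\CL(d,p)$ has kernel of order $h^+(d,p)/h(d,p)=2/Q(\K_{d,p})$; since this is a positive integer, $Q(\K_{d,p})\in\{1,2\}$. When $Q(\K_{d,p})=2$ the kernel is trivial, so $\CL^+(d,p)\cong\CL(d,p)$ and $H^+_{d,p}=H_{d,p}$ is totally real; when $Q(\K_{d,p})=1$ the kernel has order $2$, so $H^+_{d,p}\supsetneq H_{d,p}$ and $H^+_{d,p}$ is not totally real. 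This establishes the final assertion.
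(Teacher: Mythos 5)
Your overall skeleton is exactly the paper's: determine $\rk_2\CL^+(d,p)=t+m-1$ by genus theory over a subfield, determine $\rk_4\CL^+(d,p)=t-m-1$, kill the $8$-rank using the fact that all three quadratic subfields have elementary abelian narrow $2$-class groups, and then read off $Q(\K_{d,p})$ from Kuroda's formula via the kernel of $\CL^+(d,p)\twoheadrightarrow\CL(d,p)$. Your $8$-rank argument is the paper's Lemma~\ref{lemma:quadbound} in outline (the paper proves the containment $2\CL^+(d,p)\subseteq\sum_i j_i(\CL^+(k_i))$ prime-by-prime, splitting into the inert, ramified, and split cases), and your final step via $h^+(d,p)/h(d,p)=2/Q(\K_{d,p})$ is identical to Section~3.3.

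The genuine gap is the $4$-rank. You assert that ``the count of nontrivial links is $t-m_{d,p}-1$'' but give no mechanism that would produce this number, and this is where essentially all of the paper's work lies. The paper obtains the equality as a two-sided bound. The lower bound $\rk_4\CL^+(d,p)\geq t-m-1$ comes from \emph{rational} decompositions of second type: the Hilbert-symbol conditions on a divisor $a\mid d$ are encoded in an $m\times t$ R\'edei-type matrix $B_0$ over $\FF_2$, whose kernel has dimension at least $t-m$ for size reasons alone. The upper bound requires leaving $\QQ$ entirely: one must work over $\K_p$, construct the elements $\alpha_i,\widetilde{\alpha_i}\in\OO_{\K_p}$ from solutions of the ternary forms $x_i^2-py_i^2-q_iz_i^2=0$ (these generate the part of $\sE_{d,p}/\K_{d,p}$ invisible over $\QQ$), form the full $(t+m)\times(t+m)$ generalised R\'edei matrix $A$ of quadratic residue symbols over $\K_p$, and then exploit the block identity reducing $A$ to $\begin{pmatrix}A_{11}&B_0\\ B_0^T&0\end{pmatrix}$ to conclude $\rank A=2\rank B_0$ when $B_0$ has maximal rank, whence $\rk_4\CL^+(d,p)\leq t-m-1$. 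Your ``linking'' heuristic, phrased over the rational primes dividing $dp$, does not by itself yield either bound: without the elements $\alpha_i$ over $\K_p$ you cannot even write down the conditions cutting out the unramified $C_4$-extensions of $\K_{d,p}$ that sit inside $\sE_{d,p}$, so the upper bound in particular is out of reach as proposed. (A smaller omission: your appeal to the ambiguous class number formula for the $2$-rank needs the specific input that $\K_p$ has odd narrow class number, i.e.\ the fundamental unit has norm $-1$; the paper gets $\rk_2\CL^+(d,p)=t+m-1$ from genus theory over the single layer $\K_{d,p}/\K_p$ for exactly this reason.)
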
 

Furthermore, after proving Theorem~\ref{thm1}, we will explicitly construct $H^+_{d, p}$ as the compositum of $t-1$ disjoint quadratic extensions of the totally real field $H_{dp}$, so that $H^+_{d, p}$ is totally real if and only if each of the $t-1$ aforementioned quadratic extensions is totally real. Roughly speaking, we can prove that $m_{d, p}$ of those extensions are totally real with probability $1/2$, and we expect the remaining $t-m_{d, p}-1$ to behave similarly. Hence we make the following conjecture. 
\begin{conjecture}\label{conj1}
With notation as above, assume that $\rk_2\CL(d)=\rk_2\CL^+(d)$ and $\rk_4\CL^+(d)=0$. Let $m\in\{0, 1, \ldots, t-1\}$, let $\sP_{d, m}$ denote the set of prime numbers $p$ that are coprime to $d$, congruent to $1$ modulo $4$, satisfy $\rk_4\CL^+(dp)=0$, and satisfy $m_{d, p} = m$. Then
\[
\lim_{X\rightarrow\infty}\frac{|\{p\in\sP_{d, m}:\ p\leq X,\ Q(\K_{d, p}) = 2\}|}{|\{p\in\sP_{d, m}:\ p\leq X\}|} = \frac{1}{2^{t-1}}.
\] 
\end{conjecture}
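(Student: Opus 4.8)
The plan is to prove Theorem~\ref{thm1} first, and then to analyze the explicit structure of $H^+_{d,p}$ to establish the conjecture.

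The plan is to leverage the explicit construction of $H^+_{d,p}$ promised after Theorem~\ref{thm1}, reducing the conjecture to a joint equidistribution statement for $t-1$ sign conditions. By Theorem~\ref{thm1} we know $Q(\K_{d,p}) = 2$ precisely when $H^+_{d,p}$ is totally real, and by construction $H^+_{d,p} = H_{dp}(\sqrt{\beta_1}, \ldots, \sqrt{\beta_{t-1}})$ for suitable $\beta_j \in H_{dp}^\times$, where $H_{dp}$ is totally real. Thus $H^+_{d,p}$ is totally real if and only if every $\beta_j$ is totally positive, and the conjectural density $1/2^{t-1}$ is exactly the value one obtains if the $t-1$ total-positivity conditions behave as independent fair coin flips as $p$ ranges over $\sP_{d,m}$. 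First I would attach to each index $j$ an arithmetic symbol $\sigma_j(p)\in\{\pm1\}$ encoding the total positivity of $\beta_j$, so that the conjecture becomes the assertion that the vector $(\sigma_1(p),\ldots,\sigma_{t-1}(p))$ equidistributes in $\{\pm1\}^{t-1}$; since $Q(\K_{d,p})=2$ corresponds to the single pattern in which every $\sigma_j(p)=+1$, equidistribution yields precisely the density $1/2^{t-1}$.

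Second, I would split the $t-1$ indices according to the dichotomy underlying the isomorphism $\CL^+(d,p)\cong C_2^{2m}\times C_4^{t-m-1}$: the $m=m_{d,p}$ indices coming from primes $q\mid d$ that split completely in $\K_p/\QQ$, and the remaining $t-m-1$ indices attached to the $C_4$-factors. For the first block I expect $\sigma_j(p)$ to reduce to a ray-class, i.e.\ Legendre-symbol, condition on $p$ modulo a fixed conductor, and hence to be a genuinely \emph{Frobenian} invariant with a fixed governing field. The Chebotarev Density Theorem then gives equidistribution in $\{\pm1\}^m$, with each pattern occurring with density $1/2^m$ and independently of the residue data already fixed by membership in $\sP_{d,m}$. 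This recovers the ``probability $1/2$'' statement the authors can prove and contributes the factor $1/2^m$ toward the all-plus pattern.

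The hard part is the second block of $t-m-1$ conditions. Here I expect $\sigma_j(p)$ to be a \emph{spin}-type symbol: the total positivity of $\beta_j$ should be governed by the signs, under the various real embeddings, of a generator of a prime ideal above $p$ in a subfield of $H_{dp}$, and such a symbol is \emph{not} Frobenian, precisely because the modulus of the underlying Jacobi symbol grows with $p$ so that no fixed governing field exists. Chebotarev is therefore unavailable, and one must instead prove cancellation directly in the character sums $\sum_{p\le X}\chi(p)\prod_j\sigma_j(p)$ over the nontrivial characters $\chi$ of the fixed part, via bilinear-form and Vinogradov-type estimates in the spirit of Friedlander--Iwaniec and the oscillation-of-spins method. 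The central obstacle is twofold: establishing nontrivial cancellation for even a single spin $\sigma_j$ is already a delicate estimate at the edge of current analytic technology, and, more seriously, proving \emph{joint} independence --- that distinct spins $\sigma_j,\sigma_{j'}$ and the Frobenian block do not conspire --- requires controlling the mixed character sums uniformly in the growing moduli. I would expect to push the equidistribution of the Frobenian block through unconditionally, to handle a single spin either conditionally on GRH or via known oscillation results, but to fall short of the full joint equidistribution needed to treat all $t-m-1\ge 2$ spins simultaneously; this gap is exactly what keeps the statement a conjecture rather than a theorem.
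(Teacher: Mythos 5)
This statement is a \emph{conjecture}; the paper does not prove it, and neither do you --- your proposal correctly ends by identifying the obstruction that keeps it conjectural. So the right comparison is between your strategy sketch and the paper's own heuristic plus the partial results it actually establishes (Theorem~\ref{thm2}: the cases $m=t-1$ and $m=t-2$, and the upper bound $2^{-m}$ for smaller $m$). Your overall framing matches the paper's: $Q(\K_{d,p})=2$ iff $H^+_{d,p}$ is totally real, $H^+_{d,p}$ is a compositum of $t-1$ quadratic extensions of the totally real field $H_{dp}$ (in the paper, the $m$ extensions by $\sqrt{\alpha_i}$ inside $\sE_{d,p}$ plus $t-m-1$ further extensions by the $\sqrt{\beta_j}$ of Lemma~\ref{unram}), and the conjecture is the assertion that the $t-1$ total-positivity conditions are jointly equidistributed. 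Your treatment of the first block of $m$ conditions is exactly the paper's: total reality of $\K_{p,q_i}(\sqrt{\alpha_i})$ becomes the quartic-symbol condition \eqref{eq:pqi4}, which quartic reciprocity converts into splitting in the fixed governing field $\sM_{2;d}$, and Chebotarev gives density $2^{-m}$ --- this is how the paper obtains both the $m=t-1$ case and the general upper bound.

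Where your proposal diverges from what the paper actually does --- and where it would miss a provable case --- is in classifying \emph{all} $t-m-1$ remaining conditions as non-Frobenian spin symbols, so that ``even a single spin is at the edge of current analytic technology.'' The paper shows this is too pessimistic when $t-m-1=1$: for $m=t-2$, Lemma~\ref{lemma:quadraticform} produces a divisor $a\mid d$ for which the ternary form $px^2-ay^2=(d/a)z^2$ is solvable over $\QQ$ (not merely over $\K_p$), and Lemma~\ref{lemma:keycriterion} converts total positivity of the resulting $\beta$ into the condition $\fpr{ab}{p}\fpr{ap}{b}\fpr{bp}{a}=-1$, which is again governed by a fixed field $\sM_{4;d}=\K_{-1,d}(\sqrt{\rho_1\cdots\rho_t})$ via quartic reciprocity. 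So the single extra condition is Frobenian and the density $2^{-(t-1)}$ follows unconditionally from Chebotarev --- no GRH, no oscillation-of-spins estimates. The genuinely hard (and genuinely spin-like, in your sense) situation only begins when $m\leq t-3$, where the solutions of the relevant ternary forms live over $\K_p$ and the criteria involve data (generators, units) attached to the varying prime $p$; that is where the paper, like you, stops. If you revise your plan, the key missing idea is the rational solvability of the second-type decomposition when $m=t-2$ and its consequence that one more of the $t-1$ coin flips is Frobenian.
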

\noindent Our main ``statistical'' result about $Q(\K_{d, p})$ is the following theorem.
\begin{theorem}\label{thm2}
With notation as in Conjecture~\ref{conj1}, the map 
$$
\sP_{d, m}\rightarrow \{1, 2\},\qquad p\mapsto Q(\K_{d, p})
$$
is Frobenian for $m = t-1$ and $m = t-2$. Moreover, Conjecture~\ref{conj1} holds for $m = t-1$ and $m = t-2$, and, for all $m\in\{0, 1, \ldots, t-3\}$, we have
\[
\limsup_{X\rightarrow\infty}\frac{|\{p\in\sP_{d, m}:\ p\leq X,\ Q(\K_{d, p}) = 2\}|}{|\{p\in\sP_{d, m}:\ p\leq X\}|} \leq \frac{1}{2^{m}}.
\] 
\end{theorem}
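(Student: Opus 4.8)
The plan is to build on Theorem~\ref{thm1}, which tells us that $Q(\K_{d,p}) = 2$ precisely when the narrow $2$-Hilbert class field $H^+_{d,p}$ is totally real, and on the structural description promised immediately after it: $H^+_{d,p}$ is a compositum of $t-1$ disjoint quadratic extensions of the totally real field $H_{dp}$, each of which is either totally real or totally imaginary. Thus $Q(\K_{d,p}) = 2$ if and only if \emph{all} $t-1$ of these quadratic extensions are totally real. The strategy is therefore to translate the reality of each such quadratic extension into a condition on the splitting behaviour of $p$ that can be read off from a Frobenius class in a suitable governing field, and then to count.

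First I would set up explicit generators for the $t-1$ quadratic extensions. Since $H_{dp}$ is totally real, each quadratic extension is of the form $H_{dp}(\sqrt{\alpha})$ for some $\alpha\in H_{dp}^\times$, and its reality is governed by the signs of $\alpha$ at the real places of $H_{dp}$. The key idea is that the $m = m_{d,p}$ primes of $d$ that split completely in $\K_p/\QQ$ contribute the ``$C_2$-type'' directions, and for these one can show — as the excerpt already claims — that the associated reality condition is equidistributed, each being totally real with probability $1/2$ and, crucially, \emph{independently} governed by Frobenius data. Concretely, for each such split prime $q\mid d$ one expects a quadratic (or at most biquadratic) governing field $M_{d,q}$ over which the sign condition becomes a Chebotarev condition; the density of $p$ making that one extension totally real is then $1/2$.

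For the two claimed Frobenian cases, $m = t-1$ and $m = t-2$, the counting collapses to something manageable. When $m = t-1$ there are no ``extra'' $C_4$-type directions ($t - m - 1 = 0$), so all $t-1$ quadratic extensions are of the equidistributed split-prime type; taking the compositum of the $t-1$ individual governing fields yields a single governing field $M_d$, the $t-1$ reality conditions become independent Frobenius conditions, and the density of $p$ with \emph{all} of them totally real is $(1/2)^{t-1} = 1/2^{t-1}$, matching Conjecture~\ref{conj1}. The case $m = t-2$ has exactly one $C_4$-direction; here I would show that the reality of that single remaining extension is also controlled by a Frobenius condition (this is the delicate point, since $C_4$-directions relate to whether a unit or fundamental element is a norm/totally positive, and need not be \emph{a priori} Frobenian), and that it too is independent of and equidistributed with the $m = t-2$ split-prime conditions, again giving density $1/2^{t-1}$. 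For the general upper bound with $0\le m\le t-3$, I would simply discard all information about the $C_4$-directions and retain only the $m$ independent split-prime conditions: the event $Q(\K_{d,p})=2$ requires all $m$ of those to be totally real, an event of Frobenian density $1/2^m$, so $\limsup \le 1/2^m$ regardless of the behaviour of the unanalysed directions.

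The main obstacle will be the $C_4$-directions. For the split primes the reality condition reduces to a sign/splitting statement that is visibly local and hence Frobenian, but the $t - m - 1$ quadratic extensions coming from the $C_4$-part of $\CL^+(dp)$ encode subtler arithmetic — essentially whether certain elements of $\OO_{H_{dp}}$ that generate $4$-torsion directions are totally positive — and it is not clear that this is expressible purely through the Frobenius of $p$ in a field independent of $p$. This is exactly why the theorem proves the Frobenian statement and the exact density only for $m = t-1$ (no $C_4$-directions) and $m = t-2$ (a single $C_4$-direction, where an explicit ad hoc analysis succeeds), and why for smaller $m$ one can only bound the density from above by ignoring these directions. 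I expect that handling even the single $C_4$-direction at $m = t-2$ will require carefully exhibiting a governing field for it, which is the technical heart of the argument.
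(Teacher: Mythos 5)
Your overall strategy coincides with the paper's: reduce $Q(\K_{d,p})=2$ to the total reality of $H^+_{d,p}$ via Theorem~\ref{thm1}, treat the $m$ ``split-prime'' directions by showing each reality condition is Frobenian of density $1/2$, handle $m=t-1$ by taking the compositum of the resulting governing fields, and obtain the upper bound $2^{-m}$ for general $m$ by discarding the unanalysed $C_4$-directions. These parts of your outline match the argument at the end of Section~\ref{section:2rank}, where the reality of $\sE_{d,p}$ is shown to be equivalent to $p$ splitting completely in $\sM_{2;d}$ and the density $2^{-m}$ follows from Chebotarev; you should still justify the independence of your $m$ conditions, i.e.\ that $[\sM_{2;d}:\K_{-1,q_1,\ldots,q_m}]=2^m$ and that $\sM_{2;d}$ is disjoint from the field detecting $\rk_4\CL^+(dp)=0$, but this is routine.

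The genuine gap is the case $m=t-2$, which you explicitly defer: you note that the single $C_4$-direction ``need not be a priori Frobenian'' and that one must ``carefully exhibit a governing field for it,'' but you give no indication of how, and this is precisely the technical bulk of the paper. The paper resolves it by (i) producing a \emph{rational} divisor $a\mid d$ for which $px^2-ay^2=(d/a)z^2$ is solvable over $\QQ$, the same $a$ for every $p$ in a given class $\sP_{d,t-2,\Omega}$ (Lemma~\ref{lemma:quadraticform}); (ii) normalizing the solution so that $\beta=(x\sqrt{p}+y\sqrt{a})(u+v\sqrt{p})$ generates an unramified-at-finite-primes $C_4$-extension of $\K_{d,p}$ (Lemma~\ref{lemma:unram}); (iii) showing $\K_{d,p,a}(\sqrt{\beta})$ is totally real iff $xv>0$ (Lemma~\ref{lemma:infinity}) and converting that sign condition into the residue-symbol identity $\fpr{ab}{p}\fpr{ap}{b}\fpr{bp}{a}=-1$ (Lemma~\ref{lemma:keycriterion}); and (iv) rewriting that symbol via quartic reciprocity as a splitting condition in the fixed field $\sM_{4;d}=\K_{-1,d}(\sqrt{\rho_1\cdots\rho_t})$, which is then combined with $\sL$ and $\sM_{2;d}$ over $\sE$ so that Chebotarev yields the extra factor of $1/2$. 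Without some version of steps (i)--(iv) --- in particular the uniform choice of $a$ across all $p\in\sP_{d,t-2,\Omega}$ and the reciprocity computation --- your proposal establishes only the $m=t-1$ case and the upper bound, not the Frobenian property or the exact density for $m=t-2$.
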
 

\section{\texorpdfstring{The $2$-rank of $\CL^+(d, p)$}{The 2-rank of CL+(d,p)}}\label{section:2rank}
Let $d$ be a positive squarefree integer as in the introduction and Theorems~\ref{thm1} and~\ref{thm2} and let $p$ be a prime number in $\sP_{d, m}$. We begin by constructing an unramified at all \textit{finite} primes $C_2^{t+m-1}$-extension of $\K_{p, d}$ and stating a criterion for this extension to be totally positive. The methods to do so are classical by now. So as not to be overly repetitive, we will try to quote the work of Fouvry and Kl\"{u}ners \cite{FouvryKluners} whenever possible. Later, in the course of constructing certain unramified at all finite primes $C_4$-extensions of $\K_{d, p}$ and deriving a criterion for them to be totally real, we will do genuinely new work to generalize the aforementioned methods.  

Let $q_1,\ldots, q_t$ be the prime divisors of $d$. We may reorder the $q_i$ so that $\leg{q_i}{p}=1$ for $1\leq i\leq m$ and $\leg{q_i}{p}=-1$ for $m+1\leq i\leq t$. First, genus theory for the quadratic number field $\K_d$ implies that $\K_{p, q_1, \ldots, q_t}$ is an unramified at \textit{all} primes $C_2^{t-1}$-extension of $\K_{d, p}$. Now suppose that $1\leq i\leq m$, so that $\leg{q_i}{p} = 1$. Applying \cite[Lemma 19, p.2059]{FouvryKluners} with $D_1 = q_i$ (or $4q_i$ if $q_i=2$) and $D_2 = p$, we can choose $x_i, y_i, z_i\in\ZZ$ satisfying the ternary quadratic equation
\[
x_i^2 -py_i^2- q_iz_i^2  = 0
\]
such that
(i) $x_i^2$, $py_i^2$, and $q_iz_i^2$ are pairwise coprime, $y_i, z_i\geq 0$ (ii) $x_i$ odd, and one of $y_i$ and $z_i$ is even, and (iii) $x_i-y_i\equiv 1\bmod 4$ if $y_i$ is even and $x_i-z_i\equiv 1\bmod 4$ if $z_i$ is even. We define
\begin{equation}\label{eq:defalpha}
\alpha_i = 
\begin{cases}
x_i+y_i\sqrt{p} &\text{ if } z_i\text{ is odd},\\
\frac{1}{2}(x_i+y_i\sqrt{p}) &\text{ if } z_i\text{ is even};
\end{cases}
\end{equation}
then \cite[Lemma 20, p.2060]{FouvryKluners} implies that $\K_{p, q_i}(\sqrt{\alpha_i})/\QQ$ is a $D_8$-extension, unramified at all finite primes over $\K_{pq_i}$ and a fortiori over $\K_{p, q_i}$. The extension $\K_{p, q_i}(\sqrt{\alpha_i})/\K_p$ is a $(C_2\times C_2)$-extension, and so, upon taking the compositum over all $1\leq i\leq m$ and also with $\K_{p, q_1, \ldots, q_t}$, we find that
\begin{equation}\label{eq:defE}
\sE_{d, p} = \K_{p, q_1, \ldots, q_t}(\sqrt{\alpha_1}, \ldots, \sqrt{\alpha_m})
\end{equation}
is normal over $\K_p$ with Galois group isomorphic to $C_2^{t+m}$. We hence conclude that $\sE_{d, p}/\K_{d, p}$ is a normal, unramified at all finite primes extension with Galois group isomorphic to $C_2^{t+m-1}$. Since $\K_p$ has odd class number, genus theory over $\K_p$ \cite[Lemma 2.3]{Yue2} implies that $\rk_2\CL^+(d, p)$ is equal to one less than the number of primes of $\K_p$ that ramify in $\K_{d, p}$, and this is $t+m-1$. Hence we have proved
\begin{lemma}\label{lemma:genus}
Define $\sE_{d, p}$ as in \eqref{eq:defE}. Then $\sE_{d, p}/\K_{d, p}$ is the maximal unramified at all finite primes abelian extension of $K$ of exponent $2$. In particular, $\rk_2\CL^+(d, p) = t+m-1$.
\end{lemma}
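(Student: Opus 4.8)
The plan is to reduce the statement to a single rank computation. By class field theory the Artin map identifies the Galois group of the maximal unramified at all finite primes abelian extension of $\K_{d, p}$ of exponent $2$ --- the narrow genus field, which I denote $G$ --- with $\CL^+(d, p)/2\CL^+(d, p)$, so that $[G:\K_{d, p}] = 2^{\rk_2\CL^+(d, p)}$. The construction carried out before the lemma shows that $\sE_{d, p}/\K_{d, p}$ is abelian (being a subextension of the elementary abelian extension $\sE_{d, p}/\K_p$ with group $C_2^{t+m}$), of exponent $2$, and unramified at every finite prime, whence $\sE_{d, p}\subseteq G$ and $[\sE_{d, p}:\K_{d, p}] = 2^{t+m-1}$. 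Both claims of the lemma will therefore follow once I show $\rk_2\CL^+(d, p) = t+m-1$, since this forces $[\sE_{d, p}:\K_{d, p}] = [G:\K_{d, p}]$ and hence $\sE_{d, p} = G$.

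To compute the rank I would apply genus theory to the \emph{quadratic} extension $\K_{d, p} = \K_p(\sqrt d)$ of $\K_p$. The hypothesis $p\equiv 1\bmod 4$ guarantees that the fundamental unit of $\K_p = \QQ(\sqrt p)$ has norm $-1$, so $\K_p$ has odd narrow class number and the ambiguous class number formula, in the form of \cite[Lemma 2.3]{Yue2}, gives that $\rk_2\CL^+(d, p)$ is one less than the number of primes of $\K_p$ ramifying in $\K_{d, p}$. Counting these is then elementary: each prime divisor $q_i$ of $d$ is coprime to $p$, so it is unramified in $\K_p/\QQ$ and splits or stays inert according as $\leg{q_i}{p} = 1$ or $\leg{q_i}{p} = -1$, and every prime of $\K_p$ above $q_i$ ramifies in $\K_p(\sqrt d)$ because $q_i\mid d$. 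With the ordering fixed earlier this gives $2$ ramified primes for each of the $m$ divisors with $\leg{q_i}{p} = 1$ and one for each of the remaining $t-m$, for a total of $2m + (t-m) = t+m$; no infinite place contributes because $\K_{d, p}$ is totally real. Hence $\rk_2\CL^+(d, p) = (t+m) - 1 = t+m-1$.

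The main point is the genus-theoretic count of the middle step. Once the explicit construction of $\sE_{d, p}$ supplies the inclusion $\sE_{d, p}\subseteq G$ and the degree $2^{t+m-1}$, everything rests on correctly invoking the ambiguous class number formula over the base $\K_p$ --- in particular on the norm $-1$ property of the fundamental unit forcing odd narrow class number, and on the correct treatment of the infinite places --- and on reading the splitting type of each $q_i$ off the value $\leg{q_i}{p}$. The remaining steps are formal.
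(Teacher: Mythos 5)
Your proposal is correct and takes essentially the same route as the paper: the explicit construction supplies $\sE_{d,p}$ as an abelian exponent-$2$ extension of $\K_{d,p}$, unramified at all finite primes, of degree $2^{t+m-1}$, and the ambiguous class number formula over $\K_p$ (the paper's citation of Yue's Lemma 2.3, valid because $\K_p$ has odd class number) gives $\rk_2\CL^+(d,p) = t+m-1$, so maximality follows by comparing degrees. The only difference is presentational: you spell out the count of ramified primes, $2m + (t-m) = t+m$, and the non-contribution of the infinite places, which the paper leaves implicit.
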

\noindent Now \cite[Proposition 5, p.2061]{FouvryKluners} implies that $\K_{p, q_i}(\sqrt{\alpha_i})$ is totally real if and only if
\begin{equation}\label{eq:pqi4}
\fpr{p}{q_i}\fpr{q_i}{p} = 1.
\end{equation}
Here, as in \cite[p.2061]{FouvryKluners}, for a prime $\ell$ and a rational integer $a$, we define
\[
\fpr{a}{\ell} =
\begin{cases}
\hfil 1 & \text{if }a\text{ is a fourth power modulo }\ell \\
-1 & \text{otherwise}
\end{cases}
\]
whenever $\ell$ is an odd prime such that $\leg{a}{\ell} = 1$ and
\[
\fpr{a}{2} =
\begin{cases}
1 & \text{if }a\equiv 1\bmod 16 \\
-1 & \text{if }a\equiv 9\bmod 16
\end{cases}
\]
whenever $a\equiv 1\bmod 8$. Thus $\sE_{d, p}$ is totally real if and only if \eqref{eq:pqi4} holds for all $i\in\{1, \ldots, m\}$. We will now rewrite the condition \eqref{eq:pqi4} in terms of genuine fourth power residue symbols $\left(\frac{\cdot}{\cdot}\right)_4$ over $\K_{-1}$, a field containing a primitive fourth root of unity. Suppose that $p$ and $q_i$ split into primary primes as $p = \pi\overline{\pi}$ and $q_i = \rho_i\overline{\rho_i}$ in the ring of Gaussian integers $\OO_{\K_{-1}}$ (assume for the moment that $q_i\neq 2$). Then, since $\pi\OO_{\K_{-1}}$ and $\rho_i\OO_{\K_{-1}}$ are primes of degree $1$, we have
\[
\fpr{p}{q_i}\fpr{q_i}{p}= \leg{p}{\rho_i}_{4}\leg{q_i}{\pi}_{4} =  \leg{\pi}{\rho_i}_{4}\leg{\overline{\pi}}{\rho_i}_{4} \leg{\rho_i}{\pi}_{4}\leg{\overline{\rho_i}}{\pi}_{4}.
\]
Quartic reciprocity law \cite[Theorem 2, p 123]{IrelandRosen} implies that
\[
\leg{\pi}{\rho_i}_{4} = \leg{\rho_i}{\pi}_{4}\cdot (-1)^{\frac{p-1}{4}\frac{q_i-1}{4}}\quad\text{and}\quad\leg{\overline{\pi}}{\rho_i}_{4} = \leg{\rho_i}{\overline{\pi}}_{4}\cdot (-1)^{\frac{p-1}{4}\frac{q_i-1}{4}}.
\]
Hence
\[
\fpr{p}{q_i}\fpr{q_i}{p} 
= \leg{\rho_i}{\pi}_{4}\overline{\leg{\overline{\rho_i}}{\pi}}_{4}\leg{\rho_i}{\pi}_{4}\leg{\overline{\rho_i}}{\pi}_{4} 
= \leg{\rho_i}{\pi}_{2}.
\]
Hence we have proved that when $2\nmid q_1\dots q_m$, $\sE_{d, p}$ is totally real if and only if $p$ splits completely in the number field
\begin{equation}\label{eq:M2}
\sM_{2; d} 
=
\K_{-1, q_1, \ldots, q_m}(\sqrt{\rho_1}, \ldots, \sqrt{\rho_m}).
\end{equation}

Now suppose $q_1=2$, so that $p\equiv 1\bmod 8$. By definition, $\fpr{p}{2}=1$ if and only if $p\equiv 1\bmod 16$, i.e., if and only if $p$ splits completely in $\K_{-1, -2}(\sqrt{2+\sqrt{2}})$, while $\fpr{2}{p}=1$ if and only if $p$ splits completely in $\K_{-1, 2}(\sqrt[4]{2})$. Hence $\fpr{p}{2}\fpr{2}{p} = 1$ if and only if $p$ splits completely in $\K_{-1, 2}(\sqrt[4]{2}\sqrt{2+\sqrt{2}}) = \K_{-1, 2}(\sqrt{1+\sqrt{-1}})$. Thus, if $q_1\cdots q_m$ is even with $q_1 = 2$, say, then again $\sE_{d, p}$ is totally real if and only if $p$ splits completely in $\sM_{2; d}$, where now $\rho_1 = 1+\sqrt{-1}\in\K_{-1}$.

Define $\sP_{d, m}$ as in Conjecture~\ref{conj1} and suppose the prime $p$ as above is an element of $\sP_{d, m}$. It follows from R\'{e}dei's classical work on the $4$-rank of class groups of quadratic fields \cite{RedeiMatrix} that the condition $\rk_4\CL^+(dp) = 0$ can be detected by the Frobenius conjugacy class of $p$ in the abelian Galois group $\Gal(\K_{q_1, \ldots, q_t}/\QQ)$; furthermore, since $p$ splits completely in $\K_{q_1, \ldots, q_m}/\QQ$, the condition $\rk_4\CL^+(dp) = 0$ is in fact equivalent to $\Frob_{\K_{q_{m+1}, \ldots, q_t}/\QQ}(p)$ belonging to some fixed subset $\Sigma\subset \Gal(\K_{q_{m+1}, \ldots, q_{t}}/\QQ)$. For each element $\sigma\in\Sigma$, let $\sP_{d, m, \sigma}$ be the set of $p$ in $\sP_{d, m}$ such that $\Frob_{\K_{q_{m+1}, \ldots, q_t}/\QQ}(p) = \sigma$. Since $p\in\sP_{d, m}$ splits completely in $\K_{-1, q_1, \ldots, q_m}/\QQ$, since $\K_{q_{m+1}, \ldots, q_t}$ is disjoint from $\sM_{2; d}$, and since $[\sM_{2; d} :\K_{-1, q_1, \ldots, q_m}] = 2^m$, the Chebotarev Density Theorem implies that, for each $\sigma\in\Sigma$, the natural density of primes $p$ in $\sP_{d, m, \sigma}$ such that $\sE_{d, p}$ is totally real is equal to $2^{-m}$. Taking the union over all $\sigma\in\Sigma$, we deduce also that the natural density of primes $p$ in $\sP_{d, m}$ such that $\sE_{d, p}$ is totally real is equal to $2^{-m}$. In conjunction with Theorem~\ref{thm1}, since $H^+_{d, p}$ cannot be totally real unless $\sE_{d, p}$ is totally real, this proves the case $m_0 = t-1$ (with $\sM_{2; d}$ as the governing field) as well as the upper bound in the second part of Theorem~\ref{thm2}.

\section{\texorpdfstring{The $4$-rank of $\CL^+(d, p)$}{The 4-rank of CL+(d,p)}}
Define $\alpha_i$ as in~\eqref{eq:defalpha}. Let $\widetilde{\alpha_i}$ be the conjugate of $\alpha_i$ in $\K_p$. Let $\qq_i$ be a prime above $q_i$ in $\K_p$ and $\widetilde{\qq_i}$ be its conjugate if $i\leq m$, so that $\alpha_i\OO_{\K_p}$ factorizes into $\qq_i$ times a square ideal.

Call $a\in \K_p^\times/(\K_p^\times)^2$ a \emph{decomposition of second type for} $\K_{d, p}$ if 
\begin{itemize}
\item $a\equiv\prod_{i=1}^m \alpha_i^{e_i}\prod_{i=1}^m \widetilde{\alpha_i}^{e_i'}\prod_{i=m+1}^t q_i^{f_i}\mod (\K_p^\times)^2$, where $e_i,e_i',f_i\in\{0,1\}$; and
\item $(a,d/a)_{\rrr}=1$ for all finite and infinite primes $\rrr$ in $\OO_{\K_p}$.
\end{itemize}

\begin{lemma}
Let $a\in \K_p^\times/(\K_p^\times)^2$. Suppose that $L/\K_{d, p}$ is a $C_4$-extension unramified at all finite primes and containing $\K_{d, p}(\sqrt{a})\subseteq \sE_{d, p}$.
Then 
\begin{itemize}
\item $\Gal(L/\K_{p})\cong D_8$; and
\item $(a,d/a)_{\rrr}=1$ for all finite and infinite primes $\rrr$ in $\OO_{\K_{p}}$.
\end{itemize}
\end{lemma}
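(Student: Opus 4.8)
The plan is to prove the two claims by exploiting the structure of the Galois group $\Gal(L/\K_p)$ and then translating the existence of the $C_4$-extension into a Hilbert-symbol condition. First I would set up the group theory. Since $L\supseteq \K_{d,p}(\sqrt{a})\subseteq \sE_{d,p}$ and $\sE_{d,p}/\K_p$ is a $C_2^{t+m}$-extension by Lemma~\ref{lemma:genus} (and the construction in \eqref{eq:defE}), the element $a$ lies in the group generated modulo squares by the $\alpha_i$, $\widetilde{\alpha_i}$, and the $q_i$; in particular $\K_{d,p}(\sqrt{a})/\K_p$ is a $(C_2\times C_2)$-extension. The key observation is that $L/\K_{d,p}$ is a $C_4$-extension and $L/\K_p$ is Galois (because $L$ is, by hypothesis, the $2$-Hilbert-class-field data over $\K_p$, i.e.\ unramified at all finite primes and containing a Galois subextension). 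A group of order $8$ that contains a normal $C_2\times C_2$ with cyclic $C_4$ quotient-subgroup structure forcing a non-split extension must be either $D_8$ or $Q_8$; I would rule out $Q_8$ and the abelian groups by noting that $\Gal(L/\K_{d,p})\cong C_4$ is cyclic of order $4$ sitting inside a group of order $8$ whose quotient by $\Gal(L/\K_{d,p})$ is $\Gal(\K_{d,p}/\K_p)\cong C_2$, and that complex conjugation (or more precisely the nontrivial automorphism of $\K_{d,p}/\K_p$) acts on $\Gal(L/\K_{d,p})\cong C_4$ by inversion — which is exactly the defining relation of $D_8$ and incompatible with $Q_8$ having a unique element of order $2$.

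The crux of the first bullet is therefore to show that the lift $\tilde\sigma\in\Gal(L/\K_p)$ of the generator of $\Gal(\K_{d,p}/\K_p)$ conjugates a generator $\tau$ of $\Gal(L/\K_{d,p})\cong C_4$ to $\tau^{-1}$ rather than centralizing it. I would argue this using the compatibility with the given $D_8$-structure already established for the individual pieces: in Section~\ref{section:2rank} each $\K_{p,q_i}(\sqrt{\alpha_i})/\QQ$ was shown (via \cite[Lemma 20]{FouvryKluners}) to be a $D_8$-extension, and the action of $\Gal(\K_p/\QQ)$ (equivalently the conjugation $\alpha_i\mapsto\widetilde{\alpha_i}$) is what produces the non-abelian structure. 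Since $a$ is built out of these $\alpha_i$ and their conjugates, the conjugation action on $\sqrt{a}$ sends it to $\pm\sqrt{\widetilde{a}}$, and the unramified $C_4$-refinement $L$ inherits the inversion action. Concretely, I would pick a generator of $\Gal(L/\K_{d,p})$, compute its image under conjugation by $\tilde\sigma$, and verify the relation $\tilde\sigma\tau\tilde\sigma^{-1}=\tau^{-1}$, which identifies $\Gal(L/\K_p)$ with $D_8$.

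For the second bullet I would use the class-field-theoretic characterization of when $\K_{d,p}(\sqrt a)/\K_{d,p}$ embeds into an unramified $C_4$-extension. The extension $\K_{d,p}(\sqrt a)=\K_p(\sqrt a,\sqrt{d})$ is a biquadratic extension of $\K_p$, and $L$ being an unramified (at all finite primes) $C_4$-extension of $\K_{d,p}$ with the $D_8$-structure over $\K_p$ is exactly the condition that the quadratic form or equivalently the cup product of the classes of $a$ and $d/a$ (equivalently $d$) in $H^1(\K_p,\mu_2)$ vanishes locally everywhere. The standard tool here is that a $C_2\times C_2$-extension $\K_p(\sqrt a,\sqrt{b})$ embeds into a $D_8$-extension unramified over $\K_{d,p}$ precisely when the Hilbert symbols $(a,b)_\rrr=1$ at every prime $\rrr$ of $\K_p$; with $b\equiv d/a$ this is the claimed condition $(a,d/a)_\rrr=1$ for all finite and infinite $\rrr$. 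I would invoke the embedding-problem criterion (the obstruction to embedding a $C_2\times C_2$-extension into $D_8$ is a class in $\mathrm{Br}(\K_p)[2]$ given precisely by $(a,d/a)$), together with the local triviality forced by the extension being unramified at the finite primes and by the $D_8$ (rather than merely abelian) structure controlling the infinite places.

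The main obstacle I expect is the second bullet: cleanly identifying the embedding obstruction $(a,d/a)$ and justifying that \emph{unramifiedness at all finite primes} of $L/\K_{d,p}$ forces the Hilbert symbol to vanish at \emph{every} prime — including the infinite ones, where the argument is subtler because unramifiedness is only imposed at finite primes, so the vanishing at the archimedean places must instead come from the $D_8$ (inversion) structure established in the first bullet rather than from unramifiedness. Getting the bookkeeping of which $\alpha_i,\widetilde{\alpha_i},q_i$ appear in $a$ consistent with the Hilbert-symbol computation, and correctly handling the prime $q_1=2$ case as in Section~\ref{section:2rank}, is where the genuinely careful work lies.
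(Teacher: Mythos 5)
The paper's own proof is two sentences: since $\K_p$ has odd (narrow) class number it admits no non-trivial cyclic extension unramified at all finite primes, and the lemma then follows by running the proofs of Lemmas 15 and 17 of Fouvry--Kl\"uners with $\QQ$ replaced by $\K_p$. Your proposal has the right overall shape --- group theory for the first bullet, the Witt/Brauer embedding obstruction for the second, with the archimedean symbols coming from the global splitting of the quaternion algebra $(a,d/a)$ rather than from unramifiedness --- and your treatment of the second bullet is sound once the first is in place. But the first bullet has genuine gaps. You assert that $L/\K_p$ is Galois ``by hypothesis''; it is not part of the hypothesis, and establishing it (via the normal closure, which is still unramified at finite primes over $\K_{d,p}$) already requires the class-number input. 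More seriously, your mechanism for excluding $Q_8$ does not work: the inversion relation $\tilde\sigma\tau\tilde\sigma^{-1}=\tau^{-1}$ holds in $Q_8$ just as in $D_8$, and the relevant $C_2\times C_2$ (namely $\Gal(\K_{d,p}(\sqrt a)/\K_p)$) is a \emph{quotient} of $\Gal(L/\K_p)$, not a subgroup --- and $Q_8$ has $C_2\times C_2$ as a quotient. So neither your conjugation computation with the $\alpha_i$ nor the ``unique element of order $2$'' observation rules out $Q_8$.

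The missing idea is exactly the one the paper's proof rests on. Because $L/\K_{d,p}$ is unramified at all finite primes, every inertia subgroup of $\Gal(L/\K_p)$ at a finite prime has order at most $2$ and meets $\Gal(L/\K_{d,p})$ trivially. If $\Gal(L/\K_p)$ were $C_8$, $C_4\times C_2$, or $Q_8$, the subgroup generated by all such inertia groups would be proper (in $Q_8$ and $C_8$ the only order-$2$ subgroup lies inside the $C_4$, forcing all finite inertia to be trivial; in $C_4\times C_2$ the inertia groups generate a subgroup of index at least $2$), and its fixed field would be a non-trivial abelian extension of $\K_p$ unramified at all finite primes --- impossible since $\K_p$ has odd narrow class number. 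Only $D_8$, whose reflections generate the whole group, survives. Your proposed explicit computation of the conjugation action is thus both insufficient (it cannot separate $D_8$ from $Q_8$) and unnecessary once this class-field-theoretic fact is deployed; it is also the ingredient that makes the second bullet's obstruction argument legitimate, since Witt's criterion requires knowing the extension is genuinely dihedral over $\K_p$ and cyclic over $\K_p(\sqrt d)$.
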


\begin{proof}
Since $\K_{p}$ has odd class number, it has no non-trivial cyclic extensions that are unramified at all finite primes.
This follows from the proofs of \cite[Lemma~15]{FouvryKluners} and \cite[Lemma~17]{FouvryKluners} with $\QQ$ replaced by $\K_{p}$.
\end{proof}

%%%%%%%%%%%55

The set of decompositions of second type form a multiplicative group in $\K_p^{\times}/\left(\K_p^{\times}\right)^2$ of size $2^{1+\rk_4 \CL^+(\K_{d, p})}$.

\subsection{Generalised R\'{e}dei matrix}
Similar to \cite[Lemma 13]{FouvryKluners}, the condition $(a,d/a)_{\rrr}=1$ for all finite and infinite primes $\rrr$ in $\OO_{\K_p}$ is equivalent to the following conditions
\begin{itemize}
\item $a>0$;
\item $\Frob_{\K_{p, a}/\K_p}(\qq)=1\quad\text{if }\ord_{\qq}(da)\text{ is odd}$; and
\item $\Frob_{\K_{p, d/a}/\K_p}(\qq)=1\quad\text{if }\ord_{\qq}(a)\text{ is odd}$.
\end{itemize}

\subsubsection{Rational decompositions of second type}\label{section:Redeimatrix0}
Consider the subset of decompositions of second type where $a\in\QQ$, $a>0$.
Studying the splitting of primes in the $C_2^2$-extension $\K_{a, p}/\QQ$, we see that the condition $(a,d/a)_{\rrr}=1$ for any prime ideal $\rrr$ in $\K_p$ is equivalent to asking for each prime $q\mid d$ with $\leg{q}{p}=1$ to satisfy
\begin{align*}
\leg{a}{q}&=1\quad\text{if }  q\Bigm| \frac{d}{a} 	 ,&
\leg{d/a}{q}&=1\quad\text{if }q\mid a.
\end{align*}
Writing $a$ as a product of $q_i$, the conditions can be packaged in a matrix over $\FF_2$.
Define
\[
B_0:=
 \begin{pmatrix}
  \leg{d/q_1}{q_1} &  \leg{q_2}{q_1} & \cdots & \leg{q_m}{q_1} & \leg{q_{m+1}}{q_1} & \cdots & \leg{q_t}{q_1} \\
    \leg{q_1}{q_2} & \leg{d/q_2}{q_2} &\cdots & \leg{q_m}{q_2} & \leg{q_{m+1}}{q_2} & \cdots & \leg{q_t}{q_2} \\
\vdots  & \vdots  & \ddots &  \vdots& \vdots &  & \vdots  \\
  \leg{q_1}{q_m}  & \leg{q_2}{q_m}  & \cdots & \leg{d/q_m}{q_m} & \leg{q_{m+1}}{q_m} &\cdots  &  \leg{q_t}{q_m} 
 \end{pmatrix}_+,
\]
where the subscript $+$ denotes the conversion of each entry from $\{\pm 1\}$ to $\{0,1\}$.
Then $\ker B_0$ corresponds to the set of decompositions of the second type. The size of the matrix implies that $\dim\ker B_0\geq t-m$. Therefore $\rk_4\CL^+(\K_{d, p})\geq t-m-1$. 
Combining with the fact that $\rk_2\CL^+(\K_{d, p})=t+m-1$, the $2$-part of $\CL^+(\K_{d, p})$ has size at least $2^{2t-2}$.

\subsubsection{General decompositions of second type}\label{section:Redeimatrix}
Now consider all decompositions of second type for $\K_{d, p}$.
The set of decompositions of the second type $a$ is given by the kernel of the matrix
\[A:=
 \begin{pmatrix}
A_{11} & A_{12} & A_{13}\\
A_{21} & A_{22} & A_{23}\\
A_{31} & A_{32} & A_{33}\\
\end{pmatrix},
\]
where
\begin{footnotesize}
\begin{gather*} 
 A_{11}=
\begin{pmatrix}
\leg{d/\alpha_1}{\qq_1} & \cdots & \leg{\alpha_m}{\qq_1}\\
\vdots & \ddots & \vdots\\
\leg{\alpha_1}{\qq_m} & \cdots & \leg{d/\alpha_m}{\qq_m}\\
\end{pmatrix}_+
,\
A_{12}=
\begin{pmatrix}
\leg{\widetilde{\alpha_1}}{\qq_1} & \cdots & \leg{\widetilde{\alpha_m}}{\qq_1}\\
\vdots & & \vdots\\
\leg{\widetilde{\alpha_1}}{\qq_m} & \cdots & \leg{\widetilde{\alpha_m}}{\qq_m}\\
\end{pmatrix}_+
,\
A_{13}=
\begin{pmatrix}
\leg{q_{m+1}}{\qq_1} & \cdots & \leg{q_t}{\qq_1}\\\
\vdots & & \vdots\\
\leg{q_{m+1}}{\qq_m} & \cdots & \leg{q_t}{\qq_m}\\
\end{pmatrix}_+
,\\
A_{21}=
\begin{pmatrix}
\leg{\alpha_1}{\widetilde{\qq_1}} & \cdots & \leg{\alpha_m}{\widetilde{\qq_1}}\\
\vdots & & \vdots\\
\leg{\alpha_1}{\widetilde{\qq_m}} & \cdots & \leg{\alpha_m}{\widetilde{\qq_m}}\\
\end{pmatrix}_+
,\
A_{22}=
\begin{pmatrix}
\leg{d/\widetilde{\alpha_1}}{\widetilde{\qq_1}} & \cdots & \leg{\widetilde{\alpha_m}}{\widetilde{\qq_1}}\\
\vdots & \ddots & \vdots\\
\leg{\widetilde{\alpha_1}}{\widetilde{\qq_m}} & \cdots & \leg{d/\widetilde{\alpha_m}}{\widetilde{\qq_m}}\\
\end{pmatrix}_+
,\
A_{23}=
\begin{pmatrix}
\leg{q_{m+1}}{\widetilde{\qq_1}} & \cdots & \leg{q_t}{\widetilde{\qq_1}}\\
\vdots & & \vdots\\
\leg{q_{m+1}}{\widetilde{\qq_m}} & \cdots & \leg{q_t}{\widetilde{\qq_m}}\\
\end{pmatrix}_+
,\\
A_{31}=
\begin{pmatrix}
\leg{\alpha_1}{\qq_{m+1}} & \cdots & \leg{\alpha_m}{\qq_{m+1}}\\
\vdots & & \vdots\\
\leg{\alpha_1}{\qq_t} & \cdots & \leg{\alpha_m}{\qq_t}\\
\end{pmatrix}_+
,\ 
A_{32}=
\begin{pmatrix}
\leg{\widetilde{\alpha_1}}{\qq_{m+1}} & \cdots & \leg{\widetilde{\alpha_m}}{\qq_{m+1}}\\
\vdots & & \vdots\\
\leg{\widetilde{\alpha_1}}{\qq_{m+1}} & \cdots & \leg{\widetilde{\alpha_m}}{\qq_{m+1}}\\
\end{pmatrix}_+
,\
A_{33}=
\begin{pmatrix}
\leg{d/q_{m+1}}{\qq_{m+1}} & \cdots & \leg{q_t}{\qq_{m+1}}\\
\vdots & \ddots & \vdots\\
\leg{q_{m+1}}{\qq_t} & \cdots & \leg{d/q_t}{\qq_t}\\
\end{pmatrix}_+.
\end{gather*}
\end{footnotesize}
The matrix $A$ has the same rank as 
\[B:=
\begin{pmatrix}
A_{11} &A_{11}+A_{12} & A_{13}\\
A_{11}+A_{21} &A_{11}+A_{12}+A_{21}+A_{22} & A_{13}+A_{23}\\
A_{31} &A_{31} +A_{32} & A_{33}\\
\end{pmatrix}
=
\begin{pmatrix}
A_{11} & B_0\\
B_0^T & 0\\
\end{pmatrix}.
\]

In particular, when $B_0$ has maximal rank $m$ and $m<t$, $\rank B=2\rank B_0$, then the dimension of $\ker B=(t+m)-2m=t-m$, and so $\rk_4\CL^+(\K_{d, p})\leq t-m-1$.

\subsection{\texorpdfstring{The vanishing of the $8$-rank of $\CL^+(d, p)$}{The vanishing of the 8-rank of CL+(d,p)}}
In the following lemma, let $\CL(L)$ denote the class group of a number field $L$. 
\begin{lemma}\label{lemma:quadbound}
Let $K$ be a biquadratic number field with quadratic subfields $k_1$, $k_2$, $k_3$. Let $n\geq 1$ be an integer. 
If $\rk_{2^n}\CL^+(k_i)$ is $0$ for $i=1,2,3$,
then the $2^{n+1}$-rank of $\CL^+(K)$ is $0$.
\end{lemma}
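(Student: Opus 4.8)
The plan is to work with the $2$-part $A$ of $\CL^+(K)$ as a module over the group ring $\ZZ[G]$, where $G=\Gal(K/\QQ)=\{1,g_1,g_2,g_3\}\cong C_2\times C_2$ and $g_i$ denotes the nontrivial element fixing $k_i$, so that $\Gal(K/k_i)=\{1,g_i\}$ and $g_1g_2=g_3$. The hypothesis $\rk_{2^n}\CL^+(k_i)=0$ says precisely that the $2$-part of $\CL^+(k_i)$ has exponent dividing $2^{n-1}$, and the desired conclusion $\rk_{2^{n+1}}\CL^+(K)=0$ says precisely that $A$ has exponent dividing $2^n$; so it suffices to prove $2^n A=0$. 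I would first record, for each intermediate field $k$ with $\QQ\subseteq k\subseteq K$, the two standard functorial maps on narrow class groups: the relative norm $N_{K/k}\colon\CL^+(K)\to\CL^+(k)$ and the extension-of-ideals map $\iota_{K/k}\colon\CL^+(k)\to\CL^+(K)$, together with the identities $\iota_{K/k}\circ N_{K/k}=\sum_{g\in\Gal(K/k)}g$ on $\CL^+(K)$ and $N_{K/k}\circ\iota_{K/k}=[K:k]$ on $\CL^+(k)$, both of which descend from the corresponding identities on fractional ideals.

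Next I would extract the two facts that drive the argument. Taking $k=\QQ$ and writing $\nu=\sum_{g\in G}g$, the first identity gives $\nu=\iota_{K/\QQ}\circ N_{K/\QQ}$ on $A$; since $\CL^+(\QQ)$ is trivial, this yields $\nu A=0$. Taking $k=k_i$ gives $(1+g_i)=\iota_{K/k_i}\circ N_{K/k_i}$ on $A$, so $(1+g_i)A$ lies in the image under $\iota_{K/k_i}$ of the $2$-part of $\CL^+(k_i)$; as the latter has exponent dividing $2^{n-1}$ and $\iota_{K/k_i}$ is a homomorphism, we obtain $2^{n-1}(1+g_i)A=0$ for $i=1,2,3$.

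Finally I would combine these using the elementary identity in $\ZZ[G]$
\[
(1+g_1)+(1+g_2)+(1+g_3)=2+\nu,
\]
which, because $\nu$ annihilates $A$, becomes $\sum_{i=1}^3(1+g_i)=2$ as operators on $A$. Multiplying by $2^{n-1}$ and invoking $2^{n-1}(1+g_i)A=0$ gives, for every $a\in A$,
\[
2^n a=2^{n-1}\sum_{i=1}^3(1+g_i)a=\sum_{i=1}^3 2^{n-1}(1+g_i)a=0,
\]
so $A$ has exponent dividing $2^n$ and hence $\rk_{2^{n+1}}\CL^+(K)=0$.

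The only genuinely delicate point is the first step: one must check that $N_{K/k}$ and $\iota_{K/k}$ are well defined in the \emph{narrow} setting and that the module identities persist there. This reduces to verifying that total positivity is preserved in both directions — that $N_{K/k}(\gamma)$ is totally positive in $k$ when $\gamma\in K^\times$ is totally positive (each real place of $k$ receives a product of positive reals and complex-conjugate pairs from the places of $K$ above it), and that $\beta\in k^\times$ totally positive remains totally positive in $K$ (each real place of $K$ restricts to a real place of $k$). Granting this, the narrow class groups inherit the same $\ZZ[G]$-module formalism as ordinary class groups and the computation above goes through verbatim.
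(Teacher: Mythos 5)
Your proof is correct, but it takes a genuinely different route from the paper. The paper argues prime by prime: since $\CL^+(K)$ is generated by classes of prime ideals, it fixes a prime $\PP$ of $K$, splits into cases according to the decomposition type of the rational prime below it, and in each case exhibits $[\PP]^2$ (or $[\PP]$ itself) as a product of classes of ideals extended from the quadratic subfields, whose orders are controlled by the hypothesis. You instead run a global $\ZZ[G]$-module argument on the $2$-part $A$ of $\CL^+(K)$, using $\iota_{K/k}\circ N_{K/k}=\sum_{g\in\Gal(K/k)}g$ to get $\nu A=0$ (triviality of $\CL^+(\QQ)$) and $2^{n-1}(1+g_i)A=0$, and then the identity $\sum_{i}(1+g_i)=2+\nu$ in $\ZZ[G]$ to conclude $2^nA=0$. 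The two proofs are really the same computation seen from different angles: in the split-completely case the paper's relation $[\pp_1\pp_2\pp_3\OO_K]=[\PP]^2$ is precisely your operator identity $\sum_i(1+g_i)=2+\nu$ evaluated at $[\PP]$. What your version buys is uniformity --- no case analysis over decomposition types, and a statement that visibly generalizes (it only uses that the base field has trivial narrow class group and that the exponents of the $2$-parts of the $\CL^+(k_i)$ divide $2^{n-1}$); what the paper's version buys is that it stays entirely at the level of ideals and requires no discussion of functoriality of the narrow class group. You are also right that the only delicate point in your approach is checking that $N_{K/k}$ and $\iota_{K/k}$ respect total positivity so that the identities descend to narrow class groups, and your verification of that is correct.
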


\begin{proof}
Take a prime ideal $\PP$ in $\OO_K$ above prime $p$. It suffices to show that the order of $[\PP]^2\in\CL^+(K)$ divides the order of some ideal class in $\CL^+(k_i)$. We split into three possible cases according to the splitting of $p$ in $K$.

Suppose $p$ is inert in $k_1$, $k_2$ and splits in $k_3$. Let $\pp$ be an ideal below $\PP$ in $k_3$. Since $\PP=\pp\OO_K$, if $\pp^l$ is principal in $k_3$, then $\PP^l$ must also be principal in $K$. Therefore the order of $[\PP]\in\CL^+(K)$ divides the order of $[\pp]\in\CL^+(k_3)$.

Now suppose $p$ ramifies in $k_1$ and $k_2$. Let $\pp$ be an ideal below $\PP$ in $k_3$. Since $\PP^2=\pp\OO_K$, the order of $[\PP]^2\in\CL^+(K)$ divides the order of $[\pp]\in\CL^+(k_3)$.

Suppose instead $p$ splits completely in $K$.
Let $\pp_i$ be the prime ideal below $\PP$ in $k_i$ for $i=1,2,3$. Then $\pp_i\OO_K=\PP\PP_i$, where $\PP_i$ is a conjugate prime ideal of $\PP$ under the non-trivial map in $\Gal(K/k_i)$.
Then $(p)\OO_K=\PP\PP_1\PP_2\PP_3$, so
$[\pp_1\pp_2\pp_3\OO_K]=[\PP]^2$ in $\CL^+_K$.
Therefore the order of $[\PP]^2\in\CL^+(K)$ divides the lcm of orders of $[\pp_i]\in\CL^+(k_i)$.
\end{proof}

\begin{lemma}\label{lemma:quadbound2}
Define $\sP_{d, m}$ as in Conjecture~\ref{conj1} and suppose $p\in \sP_{d, m}$. Then $\rk_8\CL^+(d, p) = 0$.
\end{lemma}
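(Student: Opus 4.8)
The plan is to deduce this directly from Lemma~\ref{lemma:quadbound}, applied with $n=2$ to the biquadratic field $K=\K_{d,p}$, whose three quadratic subfields are $\K_d$, $\K_p$, and $\K_{dp}$. Since that lemma asserts $\rk_{2^{n+1}}\CL^+(K)=0$ whenever $\rk_{2^n}\CL^+(k_i)=0$ for each of the three quadratic subfields $k_i$, taking $n=2$ reduces the claim $\rk_8\CL^+(d,p)=0$ to verifying the three conditions $\rk_4\CL^+(\K_d)=\rk_4\CL^+(\K_p)=\rk_4\CL^+(\K_{dp})=0$.

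First I would dispatch the fields $\K_d$ and $\K_{dp}$, for which the required vanishing is built into the hypotheses. The condition $\rk_4\CL^+(d)=0$ is part of the standing assumptions of Conjecture~\ref{conj1}, and $\rk_4\CL^+(dp)=0$ is one of the defining membership conditions for $p\in\sP_{d,m}$. Thus no work is needed for these two subfields.

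Next I would treat $\K_p=\QQ(\sqrt{p})$. Here I would use that $p\equiv 1\bmod 4$, so that the discriminant of $\K_p/\QQ$ is $p$ and the only ramified prime is $p$ itself. Genus theory for the narrow class group then gives $\rk_2\CL^+(p)=1-1=0$, so $\CL^+(\K_p)$ is trivial and in particular $\rk_4\CL^+(p)=0$. With all three $4$-rank conditions established, Lemma~\ref{lemma:quadbound} with $n=2$ immediately yields $\rk_8\CL^+(\K_{d,p})=0$, which is the assertion.

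I do not expect any genuine obstacle in this argument: the substantive content has already been absorbed into Lemma~\ref{lemma:quadbound}, whose inductive formulation was arranged precisely so that the $2^{n+1}$-rank of a biquadratic field is controlled by the $2^n$-ranks of its quadratic subfields. The only place demanding a moment's attention is the $\K_p$ case, and even there the conclusion is immediate once one observes that a single prime ramifies in $\QQ(\sqrt{p})$ for $p\equiv 1\bmod 4$.
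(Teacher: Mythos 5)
Your proposal is correct and follows the paper's own argument exactly: apply Lemma~\ref{lemma:quadbound} with $n=2$ to $\K_{d,p}$, using the hypothesis $\rk_4\CL^+(d)=0$, the membership condition $\rk_4\CL^+(dp)=0$, and the genus-theoretic fact that $\rk_2\CL^+(p)=0$ (hence $\rk_4\CL^+(p)=0$) for $p\equiv 1\bmod 4$. No discrepancies to report.
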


\begin{proof}
The quadratic subfields of the biquadratic field $\K_{d, p}$ are $\K_d$, $\K_p$, and $\K_{dp}$. By assumption on $d$, we have $\rk_4\CL^+(d) = 0$. We have $\rk_2\CL^+(p) = 0$ and so also $\rk_4\CL^+(p) = 0$. By definition of $\sP_{d, p}$, we have $\rk_4\CL^+(dp) = 0$. The result now follows from Lemma~\ref{lemma:quadbound}.
\end{proof}

\subsection{Proof of Theorem~\ref{thm1}}
The lower and upper bounds from Sections~\ref{section:Redeimatrix0} and \ref{section:Redeimatrix}, respectively, yield
$$
\rk_4\CL^+(d, p) = t-m-1.
$$
In conjunction with Lemma~\ref{lemma:quadbound2}, we conclude that
$$
\CL^+(d, p)\cong C_2^{2m}\times C_4^{t-m-1}.
$$
Now equation \eqref{eq:simplified} implies that $2^{2t-2} = h^+(d, p)\geq h(d, p) = Q(\K_{d, p})\cdot 2^{2t-3}$, so that $Q(\K_{d, p})\leq 2$ with equality if and only if $h^+(d, p) = h(d, p)$, i.e., if and only if $H^+(d, p)$ is totally real.

\section{\texorpdfstring{Construction of $H^+_{d, p}$}{Construction of H+{d,p}}}
In this section, we will give an explicit construction the narrow $2$-Hilbert class field $H^+_{d, p}$ of $\K_{d, p}$. We have
$$
H^+_{d, p} = \sE_{d, p}\quad\text{ if }\quad m = t-1,
$$
where $\sE_{d, p}$ is defined in~\eqref{eq:defE}. This follows from the upper bound on the $4$-rank of $\CL^+(d, p)$ given in Section~\ref{section:Redeimatrix}.

When $m \leq t-2$, we will construct certain unramified at finite primes $C_4$-extensions of $\K_{d, p}$ by working over $\K_p$. In general, this does not lead to simple criteria for $H^+_{d, p}$ to be totally real. When $m = t-2$, then we can construct the unramified at finite primes $C_4$-extension of $\K_{d, p}$ by working over $\QQ$, and in this case we can find a criterion for $H^+_{d, p}$ to be totally real that is amenable to density computations.

\subsection{\texorpdfstring{Constructing unramified $C_4$-extensions}{Constructing unramified C4-extensions}}
We use the following proposition to show that certain extensions are unramified.
\begin{prop}[{\cite[Theorem 120]{Hecke}}]\label{prop:Hecke}
Let $L$ be a number field and $\alpha\in L\setminus L^2$ chosen coprime to $2$. Then $L(\sqrt{\beta})/L$ is unramified at all primes if and only if  $\beta\OO_L$ is a square and
\[X^2=\beta\bmod 4\]
is solvable for some $X\in L$.
\end{prop}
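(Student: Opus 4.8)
The plan is to prove this place by place, since ramification in the quadratic extension $L(\sqrt{\beta})/L$ is a purely local condition (this is essentially Hecke's classical criterion, but I will reconstruct the argument). Throughout I write $\beta$ for the chosen element, which by hypothesis is a non-square coprime to $2$, and I read ``unramified at all primes'' as ``at all finite primes,'' i.e.\ the relative different is trivial. First I would partition the finite primes $\pp$ of $L$ into the tame primes $\pp\nmid 2$ and the wild primes $\pp\mid 2$, and show that the condition ``$\beta\OO_L$ is a square'' governs exactly the tame places while the congruence condition governs exactly the wild places.

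At a prime $\pp\nmid 2$ the extension is tame, and a standard local computation shows that $L_\pp(\sqrt{\beta})/L_\pp$ is unramified if and only if $\ord_\pp(\beta)$ is even: when the valuation is even one absorbs a square power of a uniformizer and reduces to adjoining the square root of a unit, which gives a trivial or unramified quadratic extension, whereas an odd valuation yields a totally ramified quadratic extension. Since $\beta$ is coprime to $2$ we have $\ord_\pp(\beta)=0$ at every $\pp\mid 2$, so requiring $\ord_\pp(\beta)$ to be even at every finite prime is exactly the statement that $\beta\OO_L$ is a square ideal. Thus the first condition is precisely unramifiedness at all odd primes, and this part is routine.

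The heart of the matter, and the step I expect to be the main obstacle, is the wild case $\pp\mid 2$, where $\beta$ is a unit. I would establish the local lemma that $L_\pp(\sqrt{\beta})/L_\pp$ is unramified if and only if $\beta\equiv X^2\pmod{4}$ for some $X\in\OO_{L_\pp}$. For the implication from the congruence, set $\omega=\tfrac12(X+\sqrt{\beta})$; its minimal polynomial $T^2-XT+\tfrac14(X^2-\beta)$ has integral coefficients precisely because $X^2\equiv\beta\pmod 4$, and its discriminant is the unit $\beta$, so $\OO_{L_\pp}[\omega]$ is the maximal order and the extension is unramified (if $\beta$ is a local square the extension is trivial, and both sides hold). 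For the converse I would use that an unramified quadratic extension $M$ of the completion $L_\pp$ is monogenic, writing $\OO_M=\OO_{L_\pp}[\omega]$ with $\omega=u+v\sqrt{\beta}$, whose minimal polynomial $T^2+bT+c$ has unit discriminant $b^2-4c=4v^2\beta$. Unitness forces $\ord_\pp(v)=-\ord_\pp(2)$, so that $X:=-b=2u$ lies in $\OO_{L_\pp}$ and $Y:=2v$ is a unit; expanding $4c=X^2-Y^2\beta\in 4\OO_{L_\pp}$ and inverting the unit $Y$ modulo $4$ then yields $\beta\equiv (XY^{-1})^2\pmod 4$. The delicate features here are that the residue field has characteristic $2$ (so unramifiedness cannot be detected by reducing a unit, as every residue-field element is already a square), and that $v$ has negative valuation $-\ord_\pp(2)$, which is exactly what produces the denominator $4$ and hence the mod-$4$ congruence.

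Finally I would globalize the wild condition. By the Chinese Remainder Theorem $\OO_L/4\OO_L\cong\prod_{\pp\mid 2}\OO_{L,\pp}/4\OO_{L,\pp}$, so the solvability of $X^2\equiv\beta\pmod 4$ with $X\in\OO_L$ is equivalent to simultaneous local solvability at every $\pp\mid 2$, which by the local lemma is equivalent to unramifiedness at every prime above $2$. Combining this with the tame analysis of the previous paragraph shows that $L(\sqrt{\beta})/L$ is unramified at all finite primes if and only if $\beta\OO_L$ is a square and $X^2\equiv\beta\pmod 4$ is solvable, as claimed.
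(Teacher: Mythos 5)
The paper gives no proof of this proposition --- it is quoted directly from Hecke (Theorem 120) as a known result --- so there is no internal argument to compare yours against. Your reconstruction is correct and is the standard local proof of Hecke's criterion. The tame analysis (unramified at $\pp\nmid 2$ iff $\ord_{\pp}(\beta)$ is even, which together with coprimality to $2$ is exactly the square-ideal condition) is routine, and your wild local lemma is carried out properly in both directions: the congruence $X^2\equiv\beta\bmod 4$ produces the integral element $\omega=\tfrac12(X+\sqrt{\beta})$ with unit discriminant $\beta$, hence an unramified extension; conversely, monogenicity of the unramified quadratic extension plus unitness of the discriminant $4v^2\beta$ forces $\ord_{\pp}(v)=-\ord_{\pp}(2)$, and clearing denominators by the unit $Y=2v$ recovers the mod-$4$ congruence. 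The final globalization via $\OO_L/4\OO_L\cong\prod_{\pp\mid 2}\OO_{L_{\pp}}/4\OO_{L_{\pp}}$ is also correct. Two cosmetic points worth noting: the statement as printed has a typo ($\alpha$ is introduced but $\beta$ is used throughout), and your reading of ``unramified at all primes'' as ``at all finite primes'' is the right one --- it is the only reading consistent with how the paper applies the proposition in Lemmas~\ref{unram} and~\ref{lemma:unram}, where real places are handled separately.
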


\begin{lemma}\label{unram}
Suppose that $a\mid d$ and that $a$ is even if $d$ is even. Let $p\equiv 1\bmod 4$ be a prime.
Suppose that
\begin{equation}\label{2decom}
X^2-aY^2=\frac{d}{a}Z^2\end{equation}
is solvable for some $X,Y,Z\in\K_p$. 
Then there exists a solution such that $\beta:=X+Y\sqrt{a}$ gives an extension $\K_{d, p, a}(\sqrt{\beta})/\K_{d, p, a}$ that is unramified at all finite primes.

More specifically, $\beta$ can be taken such that
\begin{itemize}
\item $X,Y,Z\in\OO_{\K_p}$,
\item $\gcd((X),(Y),(Z))$ is a square ideal,
\item $X, Z$ are coprime with $2$ and $2\mid Y$, 
\item $
\left\{\begin{array}{ll}X-Y &\text{if } a\equiv 1\bmod 4,\\
X-\frac{Y^2}{2} &\text{if } a\equiv 2\bmod 4
\end{array}\right\}
\equiv 
\begin{cases}
\hfil 1\bmod 4& \text{if } p\equiv 1\bmod 8,\\
1\text{ or }\frac{\frac{1+p}{2}\pm\sqrt{p}}{2}\bmod 4& \text{if } p\equiv 5\bmod 8.
\end{cases}$
\end{itemize}
\end{lemma}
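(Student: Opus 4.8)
The plan is to adapt the argument of Fouvry--Kl\"uners \cite[Lemma~19 and Lemma~20]{FouvryKluners}, which treats the analogous situation over $\QQ$, to the base field $\K_p$, and then to invoke Hecke's criterion (Proposition~\ref{prop:Hecke}) over $L = \K_{d, p, a}$. Concretely, I would produce a solution $(X, Y, Z)$ of \eqref{2decom} satisfying all four bulleted normalizations and then verify the two hypotheses of Proposition~\ref{prop:Hecke} for $\beta = X + Y\sqrt a$: that $\beta\OO_{\K_{d, p, a}}$ is a square ideal and that $\beta$ is a square modulo $4$. Throughout I would use that for $p\equiv 1\bmod 4$ the field $\K_p$ has odd class number, so that any ideal of $\OO_{\K_p}$ whose square is principal is already principal; this lets me pass between square ideals and squares of elements in the normalization step.

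First I would normalize the solution. Clearing denominators and scaling by an element of $\K_p^\times$ (which multiplies both sides of \eqref{2decom} by a square, hence preserves the equation) puts $(X, Y, Z)$ in $\OO_{\K_p}$, and stripping the non-square part of $\gcd((X),(Y),(Z))$ — realizable as a principal square because the class number is odd — arranges that this gcd is a square ideal. The genuinely new part, relative to the rational case, is the $2$-adic normalization forcing $X, Z$ coprime to $2$ and $2\mid Y$: reducing \eqref{2decom} modulo successive powers of $2$ in $\OO_{\K_p}$, and using the hypothesis that $a$ is even exactly when $d$ is, I would pin down the possible $2$-adic valuations of $X, Y, Z$ and choose a representative with the stated parities. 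Here the splitting type of $2$ in $\K_p$ enters — $2$ splits for $p\equiv 1\bmod 8$ and is inert for $p\equiv 5\bmod 8$ — which is precisely what produces the dichotomy in the final congruence condition.

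With $\beta = X + Y\sqrt a$ so normalized, let $\widetilde\beta = X - Y\sqrt a$ be its conjugate under the element of $\Gal(\K_{d, p, a}/\K_p)$ fixing $\sqrt{d/a}$ and negating $\sqrt a$. Then $\beta\widetilde\beta = X^2 - aY^2 = (d/a)Z^2 = (Z\sqrt{d/a})^2$, so $(\beta)(\widetilde\beta)$ is a square ideal of $\OO_{\K_{d, p, a}}$, because $\sqrt{d/a}\in\K_{d, p, a}$. To deduce that $(\beta)$ itself is a square I would show that $\beta$ and $\widetilde\beta$ are coprime outside the primes dividing $2(d/a)Z$: any common prime divisor divides $\beta + \widetilde\beta = 2X$ and $\beta - \widetilde\beta = 2Y\sqrt a$, and since $X$ is coprime to $2$ and $\gcd((X),(Y),(Z))$ is a square, such primes are confined to divisors of $(d/a)Z$, at which the contribution to $(\beta)$ is forced to be even. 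As $2\mid Y$ makes $\beta\equiv X\bmod 2$ a unit at $2$, this yields both that $\beta$ is coprime to $2$ (as Proposition~\ref{prop:Hecke} requires) and that $(\beta)$ is a square ideal.

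The main obstacle is the final condition $X^2\equiv\beta\bmod 4$, i.e.\ that $\beta$ is a square in $\OO_{\K_{d, p, a}}/4$. I expect this to reduce, after writing $Y = 2Y'$ and searching for $W = u + v\sqrt a$ with $W^2\equiv\beta\bmod 4$, to the requirement that the quantity $X - Y$ (when $a\equiv 1\bmod 4$) or $X - Y^2/2$ (when $a\equiv 2\bmod 4$) be a square modulo $4$ in $\OO_{\K_p}$ itself — the choice of integral generator $\sqrt a$ versus $\tfrac{1+\sqrt a}{2}$ accounting for the case split in $a$. Identifying the squares of $\OO_{\K_p}/4$ then gives the displayed residues: the single class $1$ when $2$ splits ($p\equiv 1\bmod 8$), and the three classes $1$, $\omega^2$, $\overline{\omega}^2$ with $\omega = \tfrac{1+\sqrt p}{2}$ when $2$ is inert ($p\equiv 5\bmod 8$), since $\omega^2 = \tfrac{1}{2}(\tfrac{1+p}{2} + \sqrt p)$. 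The delicate points are to prove this equivalence cleanly in the ring $\OO_{\K_{d, p, a}}/4$ and to check that the normalization above can always be refined to land in one of the admissible residue classes without disturbing the integrality, gcd, and parity conditions. Once this is in hand, Proposition~\ref{prop:Hecke} applies and shows that $\K_{d, p, a}(\sqrt\beta)/\K_{d, p, a}$ is unramified at all finite primes.
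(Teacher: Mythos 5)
Your overall strategy --- normalize a solution of the ternary form and then verify the two hypotheses of Proposition~\ref{prop:Hecke} for $\beta = X+Y\sqrt{a}$ --- is exactly the paper's, and your square-ideal and coprime-to-$2$ verifications are essentially the ones given there. But the two steps you defer as ``delicate points'' are precisely where the content of the lemma lies, and as written your plan does not contain the ideas needed to carry them out. First, the parity normalization. When $p\equiv 1\bmod 8$ the prime $2$ splits in $\K_p$, say as $\pt\pt^{\sigma}$, and a solution of \eqref{2decom} can have $Y$ divisible by $\pt$ only and $Z$ by $\pt^{\sigma}$ only (this is the case $(X^2,\{Y^2,Z^2\})\equiv(1,\{\omega,\omega'\})\bmod 4$ with $\omega=((1+p)/2+\sqrt{p})/2$), so that \emph{neither} $Y$ nor $Z$ is divisible by $2$; likewise one can be stuck with $X,Y$ coprime to $2$ and $Z$ even rather than the required $X,Z$ coprime to $2$ and $2\mid Y$. ``Reducing modulo powers of $2$ and choosing a representative with the stated parities'' cannot fix either situation, because no rescaling of a single solution changes these relative valuations: one must manufacture a genuinely new solution. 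The paper does this with the explicit automorph
\[
(X',Y',Z')=\Bigl(\tfrac{1+d/a}{2}X\pm \tfrac{d}{a}Z,\ \tfrac{1-d/a}{2}Y,\ \tfrac{1+d/a}{2}Z\pm X\Bigr)
\]
of the form $X^2-aY^2-(d/a)Z^2$, with the sign chosen so that the new $X',Z'$ become coprime to $2$ and $2\mid Y'$. Some such device is indispensable and is absent from your outline.

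Second, the mod-$4$ condition. You correctly reduce it to ``$X-Y$ (resp.\ $X-Y^2/2$) is a square in $\OO_{\K_p}/4$,'' but you give no mechanism for forcing a given solution into an admissible class. The paper's mechanism is to scale the entire triple $(X,Y,Z)$ by a unit $\delta\in\{\pm1,\pm u\}$, where $u=x+y\sqrt{p}$ is the fundamental unit of norm $-1$; since $x$ is even and $y$ odd one has $u\equiv\pm\sqrt{p}\bmod 4$ and $\delta^2\equiv 1\bmod 4$, so for instance when $X^2\equiv 1\bmod 4$ the residue $X-Y\equiv\pm1,\pm\sqrt{p}\bmod 4$ can always be moved to $1$ by such a $\delta$. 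This uses crucially that $\K_p$ has a unit of norm $-1$ (equivalently $h(p)=h^+(p)$), a fact your proposal never invokes. Without this unit action (or a substitute) the final bullet of the lemma is not achievable from an arbitrary normalized solution, and the conclusion that $\beta$ is a square mod $4$ --- hence that the extension is unramified at $2$ --- does not follow.
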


\begin{proof}
Our goal is to find a suitable $\beta=X+Y\sqrt{a}$ that satisfies the requirement in Proposition~\ref{prop:Hecke}.
Let $\sigma$ be the generator of $\Gal(\K_p/\QQ)$.
Clearing denominators we can assume $X,Y,Z\in\OO_{\K_p}$.
Since the fundamental unit in $\K_p$ has norm $-1$, we can take $x,y\in\ZZ$  satisfying $x^2-py^2=-1$ and set $u=x+y\sqrt{p}$. Looking at $x^2-py^2\equiv -1\bmod 4$ we see that $x$ is even and $y$ is odd, so $u=x+y\sqrt{p}\equiv\pm\sqrt{p}\bmod 4$ in $\OO_{\K_p}$. 

{\bfseries Choosing $\beta$ to be coprime to $2$.} 
Removing factors of $2$ we can assume $2$ divides at most one of $X,Y,Z$.
If $p\equiv 5\bmod 8$, then $2$ is inert in $\K_p$ so at most one of $X,Y,Z$ is even.

If $p\equiv 1\bmod 8$, then $2$ splits in $\K_p$. Then 
\[\Norm\left(\frac{1+\sqrt{p}}{2}\right)=\frac{1-p}{4},\text{ and }
\Norm\left(\frac{3+\sqrt{p}}{2}\right)=\frac{9-p}{4}\]
are both even but defer by $2$, so one must be congruent to $2\bmod 4$. Say $\gamma$ is the element from above with norm $2\bmod 4$. Then exactly one of the primes above $2$ divides $\gamma$ with order $1$, call this prime $\pt$. 
Suppose $\max\{\ord_{\pt}X,\ord_{\pt}Y,\ord_{\pt}Z\}=k$, then take $X(\gamma^{\sigma}/2)^k,Y(\gamma^{\sigma}/2)^k,Z(\gamma^{\sigma}/2)^k$.
Repeat the same for the ideal $\pt^{\sigma}$.
Then we can assume that no prime above $2$ divides $\gcd((X), (Y), (Z))$. Therefore at least one of $X,Y,Z$ is coprime with $2$.

The squares modulo $4$ in $\OO_{\K_p}$ are $0$, $1$, $\omega:=((1+p)/2+\sqrt{p})/2$ and $\omega':=((1+p)/2-\sqrt{p})/2$.
We have $X^2=2Y^2+Z^2\bmod 4$ when $a$ is even, and $X^2=Y^2+Z^2\bmod 4$ when $a$ is odd, we see that the possible combinations are
\begin{numcases}
{(X^2,\{Y^2,Z^2\})\equiv}
(1,\{0,1\})\bmod 4, \label{mod4case1}\\
(\omega,\{0,\omega\})\bmod 4,\label{mod4case2}\\
(\omega',\{0,\omega'\})\bmod 4,\label{mod4case3}\\
(1,\{\omega,\omega'\})\bmod 4 & if $p\equiv 1\bmod8$.\label{mod4case4}
\end{numcases}

The cases \eqref{mod4case2} and \eqref{mod4case3} are only possible when $p\equiv 5\bmod 8$. For if $p\equiv 1\bmod 8$, the norm of $\omega$ and $\omega'$ are $(1-p)^2/16$, which is even, contradicting with the assumption that at least one of $X,Y,Z$ is coprime with $2$. 

For case \eqref{mod4case4}, one can obtain another solution to \eqref{2decom} that satisfies one of \eqref{mod4case1}, \eqref{mod4case2}, \eqref{mod4case3}.
Without loss of generality assume $Z^2\equiv \omega\bmod 4$. Since $X^2\equiv 1\bmod4$ implies $X\equiv \pm 1$ or $\pm\sqrt{p}\bmod 4$, multiplying $X,Y,Z$ by a suitable $\delta\in\{\pm 1, \pm u\}$, we can also assume $X\equiv 1\bmod 4$. Let $\pt$ denote the prime above $2$ such that $\pt\mid Y$, then $t^{\sigma}\mid Z$ and $\pt,\pt^{\sigma}\nmid X$.
Take 
\begin{align}
(X',Y',Z')&
:=\left(\frac{1+d/a}{2}X\pm \frac{d}{a}Z\ ,\frac{1-d/a}{2}Y,\ \frac{1+d/a}{2}Z\pm X\right)\label{transform}\\
&\ \equiv
\begin{cases}
\hfil (X\pm Z,\ 0,\ Z\pm X)\bmod 4 & \text{if }\frac{d}{a}\equiv 1\bmod 8,\\
(-X\pm Z,\ 2,\ -Z\pm X)\bmod 4& \text{if }\frac{d}{a}\equiv 5\bmod 8.\end{cases}\nonumber \end{align}
Then 
\[\Norm(X')\equiv\Norm(Z')\equiv 1\pm (Z+Z^{\sigma})+ZZ^{\sigma}\bmod 4.\]
$Z^2\equiv \omega\bmod 4$ implies $Z\equiv \pm (1+\sqrt{p})/2$ or $\pm (5+\sqrt{p})/2\bmod 4$.
Therefore $Z+Z^{\sigma}\equiv 1 \bmod 4$ and $ZZ^{\sigma}\equiv 0$ or $2 \bmod 4$. Pick the sign such that 
$\Norm(X')\equiv\Norm(Z')\equiv 2\bmod 4$. Then $\ord_{\pt}X'=\ord_{\pt}Z'=1$ and $\pt^{\sigma}\nmid X'Z'$. Carry out the reduction as before we can obtain new $X'$ and $Z'$ that are both coprime to $2$.
Therefore we can assume $X$ is always coprime with $2$ and exactly one of $Y,Z$ is coprime with $2$.

If $X,Y$ are coprime with $2$ and $2\mid Z$, the transformation \eqref{transform} give $X',Z'$ that are coprime to $2$ and $2\mid Y'$.
Therefore we can always take $X,Z$ coprime to $2$ and $2\mid Y$.  In particular $\beta\OO_{\K_{d, p, a}}$ is coprime to $2$ since its norm is odd.

{\bfseries Choosing $\beta$ to be a square ideal.} 
Let $h$ be the class number of $\K_p$, which is odd. If $\ord_{\pp}(\gcd((X),(Y),(Z))$ is odd for some prime ideal $\pp$, we can multiply $X,Y,Z$ by some $\gamma$, where $\gamma$ satisfies $\pp^h=(\gamma)$. Remove any rational prime $p$ dividing $\gcd((X),(Y),(Z))$. Therefore we can assume $\gcd((X),(Y),(Z))$ is a square ideal involving only prime ideals above odd primes that splits in $\K_p/\QQ$. For each odd prime $p$ that splits in $\K_p/\QQ$ at most one of the primes above $p$ can divide 
$\gcd((X),(Y),(Z))$ .

Suppose there exists an odd prime dividing $\beta\OO_{\K_{d, p, a}}$, then there must be a prime $\PP$ below in $\OO_{\K_{p, a}}$ dividing $\beta\OO_{\K_{p, a}}$.
Without loss of generality assume $\PP\nmid d/a$, otherwise consider the prime in $\OO_{\K_{p, d/a}}$ and interchange the roles of $a$ and $d/a$ in the following. 
Let $\pp$ is a prime in $\K_p$ below $\PP$. Taking norms to $\K_p$ we have $\pp\mid Z^2d/a$, so $\pp\mid Z$. But $\pp$ cannot divide both $X$ and $Y$ with an odd power, otherwise $\pp$ divides $\gcd((X),(Y),(Z))$ with an odd power, so $\pp\OO_{\K_{p, a}}$ cannot divide $\beta$ with an odd power. Let $\tau$ be the generator of $\Gal(\K_{p, a}/\K_p)$. Then $\ord_{\PP}\beta+ \ord_{\PP^{\tau}}\beta=\ord_{\PP}(X+Y\sqrt{a})+ \ord_{\PP}(X-Y\sqrt{a})=\ord_{\PP}Z^2=2\ord_{\pp}Z$ being even implies that $\ord_{\PP}\beta$ is even. Therefore $\beta\OO_{\K_{d, p, a}}$ has even valuation at odd primes.

{\bfseries Choosing $\beta$ to be a square modulo $4$.} 
We now handle the ramification at $2$ in cases \eqref{mod4case1}, \eqref{mod4case2} and \eqref{mod4case3}. 
First suppose $a$ is odd so $a\equiv 1\bmod 4 $.
We assumed $2\mid Y$ so \[X+Y\sqrt{a}\equiv X-Y+2Y\left(\frac{1+\sqrt{a}}{2}\right)\equiv X-Y \bmod 4.\]
Also
$(X-Y)^2=X^2+2XY+Y^2\equiv X^2\bmod 4$.

In case \eqref{mod4case1}, $X-Y\equiv \pm 1$ or $\pm \sqrt{p}\equiv \delta\bmod 4$ for some $\delta\in \{\pm 1,\pm u\}$. In case \eqref{mod4case1}, multiplying each of $X,Y,Z$ by $\delta$ satisfies the requirement since this forces 
$ X-Y\equiv 1\bmod 4$, which a square modulo $4$.

The cases \eqref{mod4case2} and \eqref{mod4case3} are only possible when $p\equiv 5\bmod 8$. 
Suppose we are in case \eqref{mod4case2}, then $(X-Y)^2\equiv\omega\bmod 4$. Then $X-Y\equiv \pm(1+\sqrt{p})/2$ or $\pm(5+\sqrt{p})/2\bmod 4$. One of $\{\pm(1+\sqrt{p})/2$ or $\pm(5+\sqrt{p})/2\}$ is a square modulo $4$, and $u(1+\sqrt{p})/2\equiv(5+\sqrt{p})/2\bmod 4$. Therefore there exist $\delta\in \{\pm 1,\pm u\}$ such that $\delta(X-Y)$ is a square modulo $4$. Replace $X,Y,Z$ by $\delta X,\delta Y,\delta Z$ then $\beta$ is a square modulo $4$. Case \eqref{mod4case3} is similar.

Now suppose $a$ is even so $a\equiv 2\bmod 4$. 
In cases \eqref{mod4case1}, \eqref{mod4case2} and \eqref{mod4case3}, similar to above there exists $\delta\in\{\pm 1,\pm u\}$ such that $\delta X\equiv X^2+Y^2/2\equiv X^2+aY^2/4$. Then $\delta X+\delta Y\sqrt{a}\equiv (X+Y\sqrt{a}/2)^2\bmod 4$.
\end{proof}

Recall from Sections~\ref{section:Redeimatrix0} and \ref{section:Redeimatrix} that the space of decompositions of the second type has dimension equal to $t-m$ inside the $\FF_2$-vector space $\K_p^\times/(\K_p^\times)^2$. Let $a_1, \ldots, a_{t-m-1}, d$ denote a basis for this space, with $a_i|d$, and let $\beta_1, \ldots, \beta_{t-m-1}$ denote the corresponding solutions constructed in Lemma~\ref{unram}. Then we can realize $H^+(d, p)$ as the field
$$
H^+(d, p) = \sE_{d, p}(\sqrt{\beta_{1}}, \ldots, \sqrt{\beta_{t-m-1}}),
$$
where $\sE_{d, p}$ is defined in \eqref{eq:defE}.
\subsection{\texorpdfstring{The case $m = t-2$}{The case m=t-2}}
Throughout this section, we take $m=t-2$, so that $q_1,\dots,q_t$ satisfies $\leg{p}{q_1}=\dots =\leg{p}{q_{t-2}}=1$ and $\leg{p}{q_{t-1}}=\leg{p}{q_t}=-1$.
\begin{lemma}\label{lemma:quadraticform}
There exists a positive integer $a\mid d$, $a\neq 1$ or $d$, 
such that 
\begin{equation}\label{eq:2decom}
px^2-ay^2=\frac{d}{a}z^2
\end{equation}
is solvable for $x,y,z\in\QQ$.
Also 
\[\leg{a}{p}=\leg{d/a}{p}=-1.\]
\end{lemma}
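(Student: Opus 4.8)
The plan is to convert the existence of $a$ into a single statement of $\FF_2$-linear algebra via Legendre's theorem on ternary forms, and then to read that statement off from the two hypotheses $\rk_4\CL^+(d)=0$ and $\rk_4\CL^+(dp)=0$ through R\'{e}dei's theory \cite{RedeiMatrix}. Write $d = q_1\cdots q_t$ with $\leg{p}{q_i}=1$ for $i\le t-2$ and $\leg{p}{q_{t-1}}=\leg{p}{q_t}=-1$, and let $R$ be the symmetric $t\times t$ R\'{e}dei matrix of $d$ over $\FF_2$: its first $t-2$ rows are those of $B_0$, and its last two rows follow the same recipe (off-diagonal entry $\leg{q_i}{q_j}_+$, diagonal entry $\leg{d/q_i}{q_i}_+$). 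For a divisor $a=\prod_{i\in S}q_i$ with indicator vector $x=\chi_S\in\FF_2^t$, the coordinate $(Rx)_j$ equals $\leg{a}{q_j}_+$ if $q_j\nmid a$ and $\leg{d/a}{q_j}_+$ if $q_j\mid a$.

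First I would run the Legendre reduction. Since $p\equiv 1\bmod 4$ and $\leg{d}{p}=1$, the condition $\leg{-d}{p}=1$ is automatic, so $px^2-ay^2-(d/a)z^2=0$ is solvable over $\QQ$ if and only if $\leg{(d/a)p}{q}=1$ for every $q\mid a$ and $\leg{ap}{q}=1$ for every $q\mid d/a$. At the split primes $q_1,\dots,q_{t-2}$ the factor $\leg{p}{q}$ is trivial and these become exactly the defining equations $(Rx)_j=0$ ($j\le t-2$) of a rational decomposition of the second type; at each inert prime $q_{t-1},q_t$ the factor $\leg{p}{q}=-1$ flips the condition, so that, writing $w=e_{t-1}+e_t$, the full set of Legendre conditions becomes precisely $Rx=w$. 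Finally $\leg{a}{p}=\leg{d/a}{p}=-1$ holds iff exactly one of $q_{t-1},q_t$ divides $a$, i.e.\ $x_{t-1}+x_t=1$, which also forces $a\neq 1,d$. Thus the lemma is equivalent to the existence of $x\in\FF_2^t$ with $Rx=w$ and $x_{t-1}+x_t=1$.

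Next I would establish solvability of $Rx=w$ from $\rk_4\CL^+(d)=0$. By R\'{e}dei's theorem this means $\dim\ker R=1$, so $\ker R=\langle\mathbf{1}\rangle$; as $R$ is symmetric, $\mathrm{im}\,R=(\ker R)^{\perp}=\{y:\sum_i y_i=0\}$ is the space of even-weight vectors, which contains $w$. Hence $Rx=w$ is solvable, with solution set a coset $\{x_0,x_0+\mathbf{1}\}$, and both members share the same value of $x_{t-1}+x_t$. To show this common value is $1$ I would invoke $\rk_4\CL^+(dp)=0$. Assembling the R\'{e}dei matrix $M$ of $dp$ gives $M=\left(\begin{smallmatrix}R' & w\\ w^T & 0\end{smallmatrix}\right)$, where $R'$ is $R$ with its diagonal entries at positions $t-1,t$ increased by $1$ (recording $\leg{p}{q_{t-1}}=\leg{p}{q_t}=-1$) and the border $w$ records the same data, while $M_{pp}=\leg{d}{p}_+=0$. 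A direct computation shows $\binom{x}{s}\in\ker M$ exactly when $x_{t-1}=x_t=:\xi$ and $Rx=(\xi+s)w$. Now $\rk_4\CL^+(dp)=0$ forces $\ker M=\langle\mathbf{1}\rangle$, which already contains $(0,0)$ and $(\mathbf{1},1)$; a solution of $Rx=w$ with $x_{t-1}=x_t$ would, on taking $s=\xi+1$, yield $(x,s)\in\ker M$ distinct from both (since $Rx=w\neq 0$ rules out $x=0$ and $x=\mathbf{1}$), contradicting $\dim\ker M=1$. Hence every solution has $x_{t-1}+x_t=1$, and $a=\prod_{(x_0)_i=1}q_i$ is the required divisor.

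The main obstacle is this last step: correctly writing down the R\'{e}dei matrix $M$ of $dp$ and verifying the block identity above, since this requires pinning down the diagonal entries and, when $2\mid d$, using the R\'{e}dei symbol conventions at the prime $2$ consistent with those of Section~\ref{section:2rank}. Once $M$ is in hand, extracting the parity constraint from $\ker M=\langle\mathbf{1}\rangle$ is immediate. I would also double-check that Legendre's criterion genuinely encodes local solvability at $2$ and $\infty$, so that no conditions beyond $Rx=w$ intervene.
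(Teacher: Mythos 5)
Your proposal is correct, and it reaches the conclusion by a genuinely different (more systematically linear-algebraic) route than the paper. The paper picks $(e_1,\dots,e_t)\in\ker B_0\setminus\{0,\mathbf{1}\}$ (which exists simply because $B_0$ has only $t-2$ rows), and then verifies the remaining Hilbert conditions at $q_{t-1}$, $q_t$ and $p$ by hand: Hilbert reciprocity forces $(a,b)_{q_{t-1}}=(a,b)_{q_t}$, the absence of second-type decompositions for $\K_d$ forces this common value to be $-1$, and a short case analysis using the absence of second-type decompositions for $\K_{dp}$ pins down that exactly one of $q_{t-1},q_t$ divides $a$, whence $(pa,pb)_r=1$ for all $r$ and $\leg{a}{p}=\leg{d/a}{p}=-1$. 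You instead encode the entire solvability condition as the single system $Rx=w$ for the full symmetric R\'edei matrix of $d$, obtain solvability from $\mathrm{im}\,R=(\ker R)^{\perp}\ni w$ (using $\rk_4\CL^+(d)=0$), and extract the parity constraint $x_{t-1}+x_t=1$ from the block structure of the R\'edei matrix of $dp$ (using $\rk_4\CL^+(dp)=0$). The two arguments rest on exactly the same inputs --- Hilbert reciprocity is the statement that $R$ has zero row and column sums, and ``$\rk_4=0$'' is ``one-dimensional kernel'' --- but your packaging makes the role of each hypothesis transparent, shows that the admissible divisors are exactly the pair $\{a,d/a\}$, and replaces the paper's symbol-by-symbol case analysis with one kernel computation. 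The one point you flag as needing care, namely that when $2\mid d$ the row of $R$ at the prime $2$ imposes no condition beyond Legendre's criterion, is indeed harmless: that row is the sum of the remaining rows (zero column sums), and $\sum_j w_j=0$, so it is automatically consistent and $Rx=w$ is genuinely equivalent to the odd-prime conditions. The paper's proof does yield one extra normalization you do not record --- which of $q_{t-1},q_t$ divides $a$ versus $b=d/a$ --- but since your two solutions are swapped by $x\mapsto x+\mathbf{1}$, i.e.\ by $a\leftrightarrow d/a$, this is only a relabeling and costs nothing.
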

\begin{proof}
Since $\rank B_0=t-2$, $\dim\ker B_0=t-(t-2)=2$. We can pick $(e_1,\dots,e_t)\in\ker B_0\setminus\{\{0,\dots,0\},\{1,\dots,1\}\}$. Take $a=q_1^{e_1}\dots q_t^{e_t}$ and $b=d/a$. 
Then for each $1\leq i\leq t-2$,
\[(a,b)_{q_i}=1.\]

If $d$ is odd, $a\equiv b\equiv 1\bmod4$, so $(a,b)_2=1$.
Hilbert reciprocity implies
\begin{equation}\label{eq:q1q2}
(a,b)_{q_{t-1}}(a,b)_{q_t}=\prod_{r\leq\infty}(a,b)_r=1.
\end{equation}
When $d$ is even, $2$ is one of $q_1,\dots,q_{t-2}$ if $p\equiv 1\bmod 8$, and one of $q_{t-1},q_t$ if $p\equiv 5\bmod 8$, so \eqref{eq:q1q2} still holds.

Without loss of generality assume $q_{t-1}\mid b$, otherwise interchange $a$ and $d/a$.
Since $\rk_4\CL^+(d)=0$ and $\rk_4\CL^+(dp)=0$,
there are no decompositions of second type for $\K_d$ or $\K_{dp}$, so
\[(a,b)_{q_{t-1}}=(a,b)_{q_{t}}=-1\text{ and }\left((pa,b)_{q_{t-1}}=(pa,b)_{q_{t}}=-1\text{ or } (pa,b)_p=-1\right).\]
If $q_t\mid b$,
then $(pa,b)_p=\leg{b}{p}=\prod_{q\mid b} \leg{q}{p}=1$, so we must have $(pa,b)_{q_{t-1}}=(pa,b)_{q_{t}}=-1$,
but this contradicts with
\[(p,b)_{q_{t-1}}=(p,b)_{q_{t}}=\leg{q_{t-1}}{p}=\leg{q_t}{p}=-1.\]
Therefore $q_t\mid a$. Again $a$ cannot give a decomposition of second type for $\K_d$, so
$(a,b)_{q_{t-1}}=(a,b)_{q_t}=-1$, and hence $(pa,b)_{q_{t-1}}=(pa,b)_{q_t}=1$.
We also have 
\[\leg{a}{p}=\prod_{q_i\mid a}\leg{q_i}{p}=-1
\qquad \text{ and }\qquad 
(pa,pb)_p=\leg{ab}{p}=\prod_{q_i\mid d}\leg{q_i}{p}=1.\]
Therefore $(pa,pb)_r=1$ for any prime $r\leq \infty$.
\end{proof}

Since the $2$-part of the class group and narrow class group of $\K_p$ are both trivial, the fundamental unit in $\K_p$ has norm $-1$, we can take $u,v\in\ZZ$ satisfying 
\begin{equation}\label{eq:unit}
u^2-pv^2=-1.
\end{equation}
Looking at $u^2-pv^2\equiv -1\bmod 4$ we see that $u$ is even and $v$ is odd, so $u+v\sqrt{p}\equiv\pm\sqrt{p}\bmod 4\OO_{\K_p}$. Replacing $v$ with $-v$ if necessary we can assume $v-u\equiv 1\bmod 4$, so that $u+v\sqrt{p}\equiv\sqrt{p}\bmod 4\OO_{\K_p}$.
From $u^2-pv^2\equiv -1\bmod 8$, we see that this choice implies
\begin{equation}\label{eq:uv}
(u,v)
\equiv
\begin{cases}
(0,1)\bmod4 &\text{ if }p\equiv 1\bmod8,\\
(2,3)\bmod4 &\text{ if }p\equiv 5\bmod8.
\end{cases}
\end{equation}

If we take some $\beta=(x\sqrt{p}+y\sqrt{2})(u+v\sqrt{p})$, where $x,y$ satisfy \eqref{eq:2decom}, then $\K_{d, p, a}(\sqrt{\beta})/\K_{d, p}$ is a $C_4$-extension by the following lemma from \cite[Chapter VI, Exercise 4, p.321]{Lang}.

\begin{lemma}\label{lemma:nor}
Let $K_0$ be a number field.
Let $E = K_0(\sqrt{a})$, where $a\in K_0^{\times }\setminus (K_0^{\times })^2$, and let $F = E(\sqrt{\beta})$, where $\beta\in E^{\times }\setminus (E^{\times })^2$. Let $N = \Norm_{E/K_0}(\beta)$. Then $N\notin (K_0^{\times })^2\cup a\cdot (K_0^\times )^2$ if and only if $F/K_0$ has normal closure $F(\sqrt{N})$ and $\Gal(F(\sqrt{N})/K_0)$ is a dihedral group of order $8$.
\end{lemma}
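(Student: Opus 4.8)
The plan is to identify the normal closure of $F/K_0$ explicitly, read off the Galois group from a degree count, and thereby reduce the dihedral condition to a condition on which elements of $K_0^\times$ become squares in the tower $K_0\subset E\subset F$.

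Let $\sigma$ generate $\Gal(E/K_0)$. First I would observe that $N = \Norm_{E/K_0}(\beta) = \beta\,\sigma(\beta)$ is $\sigma$-invariant, hence lies in $K_0^\times$, and that the $K_0$-conjugates of $F = E(\sqrt{\beta})$ are $F$ itself and $E(\sqrt{\sigma(\beta)})$. Their compositum $E(\sqrt{\beta},\sqrt{\sigma(\beta)})$ is the normal closure $\widetilde{F}$ of $F/K_0$; since $\sqrt{\beta}\cdot\sqrt{\sigma(\beta)} = \sqrt{N}$ and $\sqrt{\beta}\in F$, this compositum equals $F(\sqrt{N})$. Thus the normal closure is always $F(\sqrt{N})$, and $[F(\sqrt{N}):K_0]\in\{4,8\}$ according to whether or not $N\in(F^\times)^2$.

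Next I would handle the group theory. If $[F(\sqrt{N}):K_0]=8$, then $G = \Gal(F(\sqrt{N})/K_0)$ has order $8$ and acts faithfully and transitively on the four $K_0$-embeddings of $F$; the subgroup $H$ fixing $F$ has order $2$ and is not normal, since $F$ is a proper subfield of its own normal closure. Among groups of order $8$, only $D_8$ has a non-normal subgroup (the abelian groups and $Q_8$ have every subgroup normal), so $G\cong D_8$. Conversely, if $[F(\sqrt{N}):K_0]=4$, then $F = F(\sqrt{N})$ is itself normal over $K_0$ and $G$ is $C_4$ or $C_2\times C_2$. Hence the dihedral condition in the statement is equivalent to the single algebraic condition $N\notin(F^\times)^2$.

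Finally I would compute which $K_0$-elements become squares in $F$. Because $F = E(\sqrt{\beta})$ with $\beta\notin(E^\times)^2$, an element $c\in E^\times$ satisfies $\sqrt{c}\in F$ if and only if $c\in(E^\times)^2\cup\beta(E^\times)^2$; applying this to $c = N$ and using $N/\beta = \sigma(\beta)$ gives $N\in(F^\times)^2$ iff $N\in(E^\times)^2$ or $\sigma(\beta)\in(E^\times)^2$. The second possibility is excluded, since $\beta\notin(E^\times)^2$ forces $\sigma(\beta)\notin(E^\times)^2$ (apply $\sigma^{-1}$). Expanding $(x+y\sqrt{a})^2$ and requiring it to lie in $K_0$ then yields the standard fact $(E^\times)^2\cap K_0^\times = (K_0^\times)^2\cup a(K_0^\times)^2$, so $N\in(F^\times)^2$ iff $N\in(K_0^\times)^2\cup a(K_0^\times)^2$. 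Combined with the previous paragraph this gives the claimed equivalence. The only step requiring genuine care is this last identification of the squares descending to $K_0$ together with the exclusion of $\sigma(\beta)\in(E^\times)^2$; the degree count and the order-$8$ classification are then routine bookkeeping.
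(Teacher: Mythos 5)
Your proof is correct. There is nothing in the paper to compare it against: the authors do not prove Lemma~\ref{lemma:nor} but import it verbatim from Lang (Chapter VI, Exercise 4), so your write-up is in effect a solution of that exercise. All three steps check out: the normal closure of $F/K_0$ is the compositum of the two conjugates $F=E(\sqrt{\beta})$ and $E(\sqrt{\sigma(\beta)})$, which equals $F(\sqrt{N})$ because $\sqrt{\beta}\,\sqrt{\sigma(\beta)}$ is a square root of $N=\beta\sigma(\beta)$; an order-$8$ Galois group containing the non-normal index-$4$ subgroup fixing $F$ must be $D_8$, since the three abelian groups of order $8$ and $Q_8$ have only normal subgroups; and the Kummer-theoretic descent $(E^\times)^2\cap K_0^\times=(K_0^\times)^2\cup a(K_0^\times)^2$, together with the observation that $N\in\beta(E^\times)^2$ would force $\sigma(\beta)\in(E^\times)^2$ and hence $\beta\in(E^\times)^2$, reduces the degree condition to $N\notin(K_0^\times)^2\cup a(K_0^\times)^2$. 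The more traditional treatment of this exercise writes down the eight conjugates of $\sqrt{\beta}$ and verifies the dihedral relation on explicit generators of order $4$ and $2$; your route trades that computation for the classification of groups of order $8$ (equivalently, the fact that $Q_8$ is the only non-abelian Dedekind group of that order), which is a legitimate and arguably cleaner way to pin down $D_8$. One cosmetic remark: the faithfulness and transitivity of the action on the embeddings of $F$ is not actually needed; all you use is that the stabilizer of $F$ has order $2$ and is not normal.
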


We claim that when $\beta$ is chosen appropriately, the $\K_{d, p, a}(\sqrt{\beta})/\K_{d, p}$ is unramified at all finite primes. Note that $\K_{d, p, a}$ is contained in $H^+(d, p)$ so $\K_{d, p, a}/\K_{d, p}$ is unramified.

\begin{lemma}\label{lemma:unram}
Let $d\in\ZZ$ be a squarefree and has no prime factors congruent to $3\bmod 4$.  Suppose $a\mid d$ and $a$ is even if $d$ is even. Let $p\equiv 1\bmod 4$ be a prime.
Suppose \eqref{eq:2decom} is solvable for some $x,y,z\in\QQ$. 
There exists $x,y,z\in\ZZ$ satisfying \eqref{eq:2decom} such that
\begin{itemize}
\item $\gcd(x,y,z)=1$,
\item $x, z$ are odd and $y$ is even, and 
\item $
x-y\equiv 
1\bmod 4$.
\end{itemize}
Setting $\beta=(x\sqrt{p}+y\sqrt{a})(u+v\sqrt{p})$ gives an extension $\K_{d, p, a}(\sqrt{\beta})/\K_{d, p, a}$ that is unramified at all finite primes.
\end{lemma}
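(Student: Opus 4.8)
The plan is to deduce unramifiedness from Hecke's criterion (Proposition~\ref{prop:Hecke}), applied with $L = \K_{d, p, a}$ and $\beta = (x\sqrt{p} + y\sqrt{a})(u + v\sqrt{p})$. Write $\gamma = x\sqrt{p} + y\sqrt{a}$ and $\eta = u + v\sqrt{p}$, so $\beta = \gamma\eta$. Setting $X = x\sqrt{p}\in\OO_{\K_p}$, $Y = y$, $Z = z$ turns \eqref{eq:2decom} into the relation $X^2 - aY^2 = \tfrac{d}{a}Z^2$ that governs Lemma~\ref{unram}; the present statement is thus a $\QQ$-rational analogue of that lemma in which the generator $\gamma$ is additionally twisted by the norm-$(-1)$ unit $\eta$ of \eqref{eq:unit} (this twist is exactly what supplies the $D_8/C_4$ structure via Lemma~\ref{lemma:nor}). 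Since $\eta$ is a unit, $\beta\OO_L = \gamma\OO_L$, so the three things to check are: $\beta$ is coprime to $2$; $\gamma\OO_L$ is a square ideal; and $\beta\equiv\square\bmod 4$ in $\OO_L$.

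First I would normalize the rational solution. Clearing denominators and dividing by $\gcd(x,y,z)$ gives a primitive integer solution. Because $p$, $a$, $d/a$ are pairwise coprime ($p\nmid d$ and $d$ squarefree) and $d$ has no prime factor congruent to $3\bmod 4$ — so the odd parts of $a$ and $d/a$ are $\equiv 1\bmod 4$ — a reduction modulo $4$ (and $8$) shows that exactly one of $x, y, z$ can be even; a linear substitution analogous to \eqref{transform} then moves the even coordinate onto $y$, arranging $x, z$ odd and $y$ even. Replacing $(x,y,z)$ by $(-x,-y,-z)$ if necessary forces $x - y\equiv 1\bmod 4$, preserving primitivity and parities. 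With $y$ even, $\gamma\cdot(x\sqrt{p} - y\sqrt{a}) = px^2 - ay^2 = \tfrac{d}{a}z^2$ is odd, so $\gamma$, and hence $\beta$, is coprime to $2$.

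For the square-ideal condition I would argue as in the ``choosing $\beta$ to be a square ideal'' step of Lemma~\ref{unram}, reading $X = x\sqrt{p}$. The key point is that $L$ contains $\sqrt{d/a}$, whence $(d/a)\OO_L = (\sqrt{d/a}\,\OO_L)^2$ and therefore $\gamma\cdot(x\sqrt{p}-y\sqrt{a})\OO_L = (\sqrt{d/a}\,z\,\OO_L)^2$ is a square ideal. A prime-by-prime descent at the odd primes — using $\gcd(x,y,z)=1$ to bound the common factors of $\gamma$ and its conjugate $x\sqrt{p}-y\sqrt{a}$ — then forces $\ord_{\PP}(\gamma)$ to be even for every prime $\PP$ of $L$, i.e.\ $\gamma\OO_L = \beta\OO_L$ is a square ideal.

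The main obstacle is the congruence condition $\beta\equiv\square\bmod 4$, which is where the unit twist and the normalization $x - y\equiv 1\bmod 4$ are consumed. From $v - u\equiv 1\bmod 4$ together with \eqref{eq:uv} one gets $\eta\equiv\sqrt{p}\bmod 4\OO_{\K_p}$ in both cases $p\equiv 1$ and $p\equiv 5\bmod 8$, whence
\[
\beta \equiv \sqrt{p}\,(x\sqrt{p} + y\sqrt{a}) = px + y\sqrt{ap}\pmod{4\OO_L}.
\]
It then remains to exhibit $\mu\in\OO_L$ with $\mu^2\equiv px + y\sqrt{ap}\bmod 4$. I expect this to split into cases according to $p\bmod 8$ and the parity of $a$ (with the half-integers $\tfrac{1+\sqrt{p}}{2}$, $\tfrac{1+\sqrt{ap}}{2}$ and their analogues entering the description of $\OO_L/4\OO_L$), the conditions $x, z$ odd, $y$ even, and $x - y\equiv 1\bmod 4$ being precisely what makes a suitable square available — mirroring the final ``square modulo $4$'' paragraphs of Lemma~\ref{unram}, but now carried out in the degree-$8$ field $\OO_L$ and with the extra factor $\sqrt{p}$. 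Verifying this congruence concretely in each residue case is the one genuinely computational, and genuinely new, part of the argument.
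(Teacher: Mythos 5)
Your overall strategy is the same as the paper's: normalize the rational solution of \eqref{eq:2decom} and then verify the conditions of Proposition~\ref{prop:Hecke} for $\beta=(x\sqrt{p}+y\sqrt{a})(u+v\sqrt{p})$, namely that $\beta$ is coprime to $2$, that $\beta\OO_{\K_{d,p,a}}$ is a square ideal, and that $\beta$ is a square modulo $4$. The normalization (mod $4$ and mod $8$ parity analysis, a linear substitution to push the even coordinate onto $y$, a sign change to get $x-y\equiv1\bmod4$), the coprimality-to-$2$ step via the odd relative norm $\tfrac{d}{a}z^2$, and the square-ideal step via $\gcd(x,y,z)=1$ and the factorization $\gamma\cdot(x\sqrt{p}-y\sqrt{a})=\tfrac{d}{a}z^2$ all match the paper's argument essentially step for step.

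The gap is that you stop exactly at the point the lemma is really about. You correctly reduce, via $u+v\sqrt{p}\equiv\sqrt{p}\bmod 4\OO_{\K_p}$, to showing that $x+y\sqrt{ap}$ is a square modulo $4$ in $\OO_{\K_{d,p,a}}$, and you correctly guess that the half-integers $\tfrac{1+\sqrt{ap}}{2}$, $\tfrac{1+\sqrt{p}}{2}$ are what make this work, but you never exhibit the square --- and this is precisely the step in which the hypotheses ``$y$ even'' and ``$x-y\equiv1\bmod4$'' are consumed. The verification is in fact short and requires no case split on $p\bmod 8$ (the normalization of the unit has already absorbed that); the only split is on the parity of $a$. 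For $a$ odd, $ap\equiv1\bmod4$ gives $\tfrac{1+\sqrt{ap}}{2}\in\OO_{\K_{d,p,a}}$ and, since $4\mid 2y$,
\[
x+y\sqrt{ap}=x-y+2y\cdot\tfrac{1+\sqrt{ap}}{2}\equiv x-y\equiv 1\bmod 4.
\]
For $a\equiv2\bmod4$ one instead computes
\[
x+y\sqrt{ap}= x-y\sqrt{a}+2y\sqrt{a}\cdot\tfrac{1+\sqrt{p}}{2}\equiv x-y\sqrt{a}\equiv\bigl(1-\tfrac{y}{2}\sqrt{a}\bigr)^2\bmod 4,
\]
the last congruence using $\tfrac{y^2}{4}a\equiv y\bmod 4$ (valid because $y$ is even and the odd part of $a$ is $\equiv1\bmod4$) together with $x-y\equiv1\bmod4$. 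Without one of these two displayed identities the proof is incomplete; with them, your outline becomes the paper's proof.
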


\begin{proof}
Our goal is to find a suitable $\beta=X+Y\sqrt{a}$ that satisfies the requirement in Proposition~\ref{prop:Hecke}.
Let $\sigma$ be the generator of $\Gal(\K_p/\QQ)$.
Clearing denominators we can assume $x,y,z\in\ZZ$.

{\bfseries Choosing $\beta$ to be coprime to $2$.} 
Removing factors of $2$ we can assume $2$ divides at most one of $x,y,z$.
Taking $px^2-ay^2=\frac{d}{a}z^2\bmod 4$, we see that $x$ must be odd and one of $y,z$ is even.
If $d$ is even, $a\equiv 2\bmod 8$ so $y$ must be even.
Now suppose $d$ is odd.
If $x,y$ are odd and $z$ is even, then we can take instead
\[\left(\frac{a+d/a}{2}px^2 ,\ \frac{a-d/a}{2}y+\frac{d}{a}z,\ \frac{a-d/a}{2}z-ay\right)\equiv (1,0,1)\bmod 2\]
as another set of solution to \eqref{eq:2decom}. 
Therefore we can always take $x,z$ odd and $y$ even.  In particular $\beta\OO_{\K_{d, p, a}}$ is coprime to $2$ since its norm 
\[\Norm_{K/\K_p}\beta=(u+v\sqrt{p})^2\frac{d}{a}z^2\]
is odd.

{\bfseries Choosing $\beta$ to be a square ideal.} 
We can assume $\gcd(x,y,z)=1$ by removing any common divisors.

Suppose there exists an odd prime dividing $\beta\OO_{\K_{d, p, a}}$, then there must be a prime $\PP$ below in $\OO_{\K_{p, a}}$ dividing $\beta\OO_{\K_{p, a}}$.
Without loss of generality assume $\PP\nmid d/a$, otherwise consider the prime in $\OO_{\K_{p, d/a}}$ and interchange the roles of $a$ and $d/a$ in the following. 
Let $\pp$ is a prime in $\K_p$ below $\PP$. Taking norms to $\K_p$ we have $\pp\mid z^2d/a$, so $\pp\mid z$. But $\pp$ cannot divide both $x$ and $y$, so $\pp\OO_{\K_{p, a}}$ cannot divide $\beta$. Then $\ord_{\PP}\beta=\ord_{\PP}z^2=2\ord_{\pp}z$ is even. Therefore $\beta\OO_{\K_{d, p, a}}$ has even valuation at odd primes.

{\bfseries Choosing $\beta$ to be a square modulo $4$.} 
First suppose $a$ is odd so $a\equiv 1\bmod 4 $.
We assumed $y$ is even so 
\[\begin{split}
\beta=(x\sqrt{p}+y\sqrt{a})(u+v\sqrt{p})
&\equiv (x\sqrt{p}+y\sqrt{a})\sqrt{p}
\equiv x+y\sqrt{ap}\\
&\equiv x-y+2y\left(\frac{1+\sqrt{ap}}{2}\right)\equiv x-y \bmod 4\OO_{\K_{d, p}}.
\end{split}\]
Since $x$ is odd and $y$ is even, taking $-x$ instead if necessary, we can assume $x-y\equiv 1\bmod 4$.
Then $\beta$ is a square modulo $4$ in $\OO_{\K_{d, p}}$.

Now suppose $a$ is even so $a\equiv 2\bmod 4$. Taking $-x$ instead if necessary, we can assume $x-y^2/2\equiv 1\bmod 4$. Then 
\[\beta=(x\sqrt{p}+y\sqrt{a})(u+v\sqrt{p})
\equiv x+y\sqrt{ap}
\equiv x-y\sqrt{a}+2y\sqrt{a}\left(\frac{1+\sqrt{p}}{2}\right)
\equiv \left(1-\frac{y}{2}\sqrt{a}\right)^2
\bmod 4.\]
Since $y$ is even, $y^2/2\equiv y\bmod4$.
\end{proof}

Take \eqref{eq:2decom} modulo $8$, 
if $d$ is odd, 
the choice in Lemma~\ref{lemma:unram} implies
\begin{equation}\label{eq:xy}
(x,y)\equiv
\begin{cases}
(1,0)\bmod 4&\text{ if }bp\equiv 1\bmod 8,\\
(3,2)\bmod 4&\text{ if }bp\equiv 5\bmod 8.
\end{cases}
\end{equation}

\subsection{\texorpdfstring{Criterion for $H^+_{d, p}$ to be totally real when $m = t-2$}{Criterion for H+_{d, p} to be totally real when m=t-2}}

\begin{lemma}\label{lemma:infinity}
The field $\K_{d, p, a}(\sqrt{\beta})$ is totally real if and only if $xv>0$.
\end{lemma}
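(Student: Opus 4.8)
The plan is to determine the signs of $\beta$ under each of the four real embeddings of $\K_{d,p,a}$ and show that all four are positive precisely when $xv>0$. Recall that $\beta = (x\sqrt{p}+y\sqrt{a})(u+v\sqrt{p})$ with $x,y,z\in\ZZ$ satisfying $px^2-ay^2=(d/a)z^2$, and recall from \eqref{eq:unit} that $u+v\sqrt{p}$ is a fundamental unit of norm $-1$, so $(u+v\sqrt{p})(u-v\sqrt{p})=-1$. First I would fix the generators of $\Gal(\K_{d,p,a}/\QQ)$, say the automorphisms sending $\sqrt{p}\mapsto \pm\sqrt{p}$ and $\sqrt{a}\mapsto\pm\sqrt{a}$ independently, and write down $\beta$ together with its three conjugates. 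The field $\K_{d,p,a}(\sqrt{\beta})$ is totally real if and only if $\beta$ is totally positive in $\K_{d,p,a}$, i.e.\ positive under all four real embeddings.

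The key observation is that the real embeddings come in two $\sqrt{p}$-conjugate pairs, and under the embedding sending $\sqrt{p}\mapsto -\sqrt{p}$ the unit $u+v\sqrt{p}$ becomes $u-v\sqrt{p} = -1/(u+v\sqrt{p})$, which is negative (since the norm is $-1$ and $u+v\sqrt p>0$ in the standard embedding). Thus the sign of the unit factor flips between the two members of a $\sqrt{p}$-pair, while the sign of the quadratic-form factor $x\sqrt{p}+y\sqrt{a}$ must be analyzed directly. The second I would compute: in each of the four embeddings the product $(x\sqrt{p}+y\sqrt{a})(u+v\sqrt{p})$ should, after using $px^2-ay^2=(d/a)z^2$, reduce to something whose sign is governed by a single quantity. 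The crucial algebraic step is that $(x\sqrt p + y\sqrt a)(x\sqrt p - y\sqrt a) = px^2 - ay^2 = (d/a)z^2 > 0$, so the two $\sqrt{a}$-conjugate factors $x\sqrt p\pm y\sqrt a$ have the \emph{same} sign; hence the sign of the quadratic factor does not depend on the choice of sign of $\sqrt{a}$, and is simply the sign of $x\sqrt p$ (equivalently $\operatorname{sgn}(x)$) in a fixed $\sqrt{p}$-embedding, flipping with the sign of $\sqrt{p}$.

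Putting these together, in the embedding $(\sqrt{p},\sqrt{a})\mapsto(+,+)$ the sign of $\beta$ is $\operatorname{sgn}(x)\cdot\operatorname{sgn}(u+v\sqrt p) = \operatorname{sgn}(x)$; in $(-,+)$ it is $\operatorname{sgn}(-x)\cdot\operatorname{sgn}(u-v\sqrt p) = \operatorname{sgn}(x)$ as well, the two sign flips cancelling, and similarly for the $\sqrt{a}\mapsto-\sqrt{a}$ embeddings by the previous paragraph. I would instead track the dependence on $v$ by writing $u+v\sqrt p$ and $u-v\sqrt p$ more carefully: since the unit is totally positive in exactly one of the $\sqrt p$-embeddings and its conjugate negative, and since replacing $v$ by $-v$ swaps these, the condition for total positivity combines $\operatorname{sgn}(x)$ with $\operatorname{sgn}(v)$, yielding total positivity exactly when $xv>0$. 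The main obstacle, and the step requiring genuine care rather than routine bookkeeping, is correctly pinning down which embedding makes $u+v\sqrt p$ positive and how the normalization $v-u\equiv 1\bmod 4$ (with $u+v\sqrt p\equiv\sqrt p$) from \eqref{eq:uv} interacts with the sign conventions, so that the four embedding signs consistently assemble into the single clean criterion $xv>0$; I would verify this by checking one explicit embedding and propagating via the two Galois sign flips.
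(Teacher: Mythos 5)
Your overall strategy is sound and shares the paper's key algebraic inputs: the identity $(x\sqrt p+y\sqrt a)(x\sqrt p-y\sqrt a)=px^2-ay^2=\tfrac{d}{a}z^2>0$ forces the two $\sqrt a$-conjugates of the quadratic factor to share a sign, and the norm $-1$ of $u+v\sqrt p$ controls the unit factor. But there is a genuine gap exactly at the step you yourself flag as delicate, and your intermediate bookkeeping is wrong as written. You assert that in the embedding $(\sqrt p,\sqrt a)\mapsto(+,+)$ the sign of $\beta$ is $\operatorname{sgn}(x)\cdot\operatorname{sgn}(u+v\sqrt p)=\operatorname{sgn}(x)$, and that in $(-,+)$ the ``two sign flips cancel'' to give $\operatorname{sgn}(x)$ again. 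The first equality presupposes $u+v\sqrt p>0$, which is not guaranteed: the normalization in the paper only fixes $v-u\bmod 4$, not the signs of $u$ and $v$, and if that were the conclusion the criterion would be $x>0$, not $xv>0$. You then say you would ``track the dependence on $v$ more carefully,'' but you never supply the fact that closes the argument. That fact is elementary and has nothing to do with the congruence $v-u\equiv 1\bmod 4$ (which is a red herring here): since $pv^2=u^2+1>u^2$, the term $v\sqrt p$ dominates $u$ in absolute value, so $\operatorname{sgn}(u+v\sqrt p)=\operatorname{sgn}(v)$ and $\operatorname{sgn}(u-v\sqrt p)=-\operatorname{sgn}(v)$. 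With this in hand your embedding-by-embedding computation does go through: every one of the four conjugates has sign $\operatorname{sgn}(x)\operatorname{sgn}(v)=\operatorname{sgn}(xv)$, and $xv\neq 0$ since $x=0$ would force $y=z=0$ and $v=0$ is impossible.

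For comparison, the paper sidesteps the question of which embedding makes the unit positive entirely: writing $\beta_1,\beta_2,\beta_3,\beta_4$ for the four conjugates, it checks $\beta_1\beta_2=\tfrac{d}{a}z^2(u+v\sqrt p)^2>0$, $\beta_1\beta_3=\tfrac{d}{a}z^2>0$ (using the norm $-1$), and $\beta_3\beta_4>0$, so all four conjugates share a sign, and then observes $\beta_1+\beta_2+\beta_3+\beta_4=4xvp$, so that common sign is $\operatorname{sgn}(xv)$. That summation trick is what replaces the case analysis you were trying to do; if you prefer your route, you must include the one-line domination argument above.
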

\begin{proof}
Let 
$\beta_1=(x\sqrt{p}+y\sqrt{a})(u+v\sqrt{p})$, 
$\beta_2=(x\sqrt{p}-y\sqrt{a})(u+v\sqrt{p})$,
$\beta_3=(-x\sqrt{p}+y\sqrt{a})(u-v\sqrt{p})$, and
$\beta_4=(-x\sqrt{p}-y\sqrt{a})(u-v\sqrt{p})$.
Then
$\beta_1 \beta_2 = bz^2(u+v\sqrt{p})^2>0$,  
$\beta_1 \beta_3 = bz^2>0$, and
$\beta_3 \beta_4 = bz^2(u-v\sqrt{p})^2>0$,
so $\beta_1,\beta_2,\beta_3,\beta_4$ are always of the same sign.
Since
$\beta_1 +\beta_2 +\beta_3 +\beta_4= 4xvp$, we have
$\beta_1,\beta_2,\beta_3,\beta_4>0$ if and only if $xv>0$.
\end{proof}

If $d$ is odd, \eqref{eq:uv} and \eqref{eq:xy} implies 
\begin{equation}\label{eq:xv}
xv\equiv 
\begin{cases}
1\bmod 4&\text{ if }b\equiv 1\bmod 8,\\
3\bmod 4&\text{ if }b\equiv 5\bmod 8.
\end{cases}
\end{equation}

\begin{lemma}\label{lemma:keycriterion}
The field $\K_{d, p, a}(\sqrt{\beta})$ is totally real if and only if 
\[\fpr{ab}{p}\fpr{ap}{b}\fpr{bp}{a}=-1,\]
where $b=d/a$.
\end{lemma}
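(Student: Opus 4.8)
The plan is to reduce the assertion, via Lemma~\ref{lemma:infinity}, to the sign statement $xv>0$, and then to prove the clean identity
\[
\fpr{ab}{p}\fpr{ap}{b}\fpr{bp}{a} = -\operatorname{sign}(xv),
\]
where $\operatorname{sign}(xv)=+1$ exactly when $xv>0$. Since Lemma~\ref{lemma:infinity} asserts that $\K_{d, p, a}(\sqrt{\beta})$ is totally real if and only if $xv>0$, this identity is precisely equivalent to the stated claim that the field is totally real if and only if the product of symbols equals $-1$.

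The first step is to evaluate each fourth-power residue symbol directly from the defining equation \eqref{eq:2decom}. Reducing $px^2-ay^2=bz^2$ modulo $p$, modulo $b=d/a$, and modulo $a$ yields the congruences $ab\equiv-(ay/z)^2$, $ap\equiv(px/y)^2$, and $bp\equiv(px/z)^2$ respectively, each of which is automatically a square, so that $\leg{ab}{p}=\leg{ap}{b}=\leg{bp}{a}=1$ and the three symbols are defined (consistent with $\leg{a}{p}=\leg{b}{p}=-1$ from Lemma~\ref{lemma:quadraticform}). Because the fourth-power symbol of a square collapses to the corresponding quadratic symbol, these congruences give
\[
\fpr{ab}{p}=\fpr{-1}{p}\leg{a}{p}\leg{y}{p}\leg{z}{p},\qquad \fpr{ap}{b}=\leg{p}{b}\leg{x}{b}\leg{y}{b},\qquad \fpr{bp}{a}=\leg{p}{a}\leg{x}{a}\leg{z}{a},
\]
turning the quartic-symbol product into a product of Jacobi symbols in the normalized solution $(x,y,z)$ times the constant $\fpr{-1}{p}\leg{a}{p}$.

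The central step is then to regroup these Jacobi symbols, using the pairs $\leg{p}{a}\leg{p}{b}=\leg{p}{d}$, $\leg{x}{a}\leg{x}{b}=\leg{x}{d}$, $\leg{y}{p}\leg{y}{b}$, and $\leg{z}{p}\leg{z}{a}$, and to flip every symbol by quadratic reciprocity for the Jacobi symbol. Since $d$ has no prime factor congruent to $3$ modulo $4$, the reciprocity corrections stay controlled, and the Hilbert product formula $\prod_{r\leq\infty}(\cdot,\cdot)_r=1$ converts the totality of the finite-place contributions into the archimedean contribution, which is precisely a sign of $x$. The role of the fundamental-unit factor $u+v\sqrt{p}$ inside $\beta$ is to supply the remaining sign of $v$: although it does not alter the finite symbols modulo squares, its normalization \eqref{eq:uv} together with the unit equation \eqref{eq:unit} fixes $v\bmod 4$, and in combination with \eqref{eq:xy}--\eqref{eq:xv} this pins down the $2$-adic Hilbert symbols. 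Threading the normalizations $x-y\equiv 1\bmod 4$ and $\gcd(x,y,z)=1$ from Lemma~\ref{lemma:unram} through this computation fixes the leading constant so that the product equals $-\operatorname{sign}(xv)$ rather than $+\operatorname{sign}(xv)$.

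I expect the main obstacle to be the bookkeeping of the $2$-adic and archimedean correction factors in the reciprocity flips, which must be carried out separately for $a$ odd (so $d$ odd and $a\equiv1\bmod4$) and $a$ even (so $d$ even and $a\equiv2\bmod4$), since both the definition of $\fpr{\cdot}{2}$ and the splitting of $2$ in $\K_p$ differ between the two cases. The most delicate point is isolating exactly how the factor $u+v\sqrt{p}$ contributes $\operatorname{sign}(v)$ to the final sign while leaving the finite quartic symbols untouched; I would confirm the resulting constant by verifying one small explicit example in each parity case as a consistency check.
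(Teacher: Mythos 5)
Your proposal is correct and follows essentially the same route as the paper's own proof: the paper likewise reduces via Lemma~\ref{lemma:infinity} to showing $\fpr{ab}{p}\fpr{ap}{b}\fpr{bp}{a}=-1$ exactly when $xv>0$, obtains your three symbol evaluations by reducing \eqref{eq:2decom} modulo $p$, modulo $b$, and modulo the odd part of $a$, flips the resulting Jacobi symbols by reciprocity (all moduli being $1\bmod 4$), and pins the final sign using $|v|\equiv 1\bmod 4$ (from \eqref{eq:unit} reduced modulo the primes dividing $v$), the normalizations \eqref{eq:uv} and \eqref{eq:xy}--\eqref{eq:xv}, and, in the even case, a mod-$16$ analysis of \eqref{eq:2decom} yielding $\leg{2}{xz}=(-1)^{y/2}\fpr{pb}{2}$. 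The only cosmetic difference is that where you invoke the Hilbert product formula, the paper evaluates the flipped symbols directly by reducing \eqref{eq:2decom} modulo the prime divisors of $x$, $y$, and $z$ --- an equivalent mechanism --- so the $2$-adic and archimedean bookkeeping you defer is precisely the computation the paper carries out, with no further ideas needed.
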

\begin{proof}
By Lemma~\ref{lemma:infinity}, it suffices to show that
\begin{equation}\label{eq:totallyreal}
\fpr{ab}{p}\fpr{ap}{b}\fpr{bp}{a}
=
\begin{cases}
- 1 &\text{ if } xv>0,\\
\hfil 1 &\text{ if } xv<0.
\end{cases}\end{equation}

Without loss of generality, assume $b$ is odd. Take $a=2^ja_0$, where $j=0$ if $d$ is odd and $j=1$ if $d$ is even.
Take \eqref{eq:2decom} modulo $p$ and modulo each odd $q\mid d$, we get 
\begin{gather*}
\fpr{-ab}{p}\leg{y}{p}=\leg{bz}{p}=-\leg{z}{p},\\
\fpr{ap}{b}\leg{y}{b}=\leg{px}{b}=-\leg{x}{b},\\
\fpr{bp}{a_0}\leg{z}{a_0}=\leg{px}{a_0}=-\leg{x}{a_0}.
\end{gather*}
Multiply these equations together
\begin{equation}\label{eq:delta1}
-\fpr{-ab}{p}\fpr{ap}{b}\fpr{bp}{a_0}=\leg{x}{a_0b}\leg{y}{bp}\leg{z}{a_0p}.
\end{equation}
Write $y=2^iy_0$, where $y_0$ is odd. Since $a_0\equiv b\equiv p\equiv 1\bmod 4$, we can rewrite \eqref{eq:delta1} as
\begin{equation}\label{eq:delta2}
-\fpr{-ab}{p}\fpr{ap}{b}\fpr{bp}{a}=\leg{a_0b}{|x|}\leg{2}{bp}^i\leg{bp}{y_0}\leg{a_0p}{z}.
\end{equation}
Take \eqref{eq:2decom} modulo each prime $r\mid x$, $r\mid y_0$, $r\mid z$, we get 
\[\leg{a}{|x|}=\leg{-b}{|x|}=\leg{-1}{|x|}\leg{b}{|x|}, \qquad 
\leg{p}{y_0}=\leg{b}{y_0}\qquad \text{and}\qquad 
\leg{p}{z}=\leg{a}{z}.\]
By \eqref{eq:xy}, $i=1$ and $\leg{2}{bp}=-1$ if $bp\equiv 5\bmod 8$ and
$\leg{2}{bp}=1$ if $bp\equiv 1\bmod 8$.
Simplifying \eqref{eq:delta2} gives
\begin{equation}\label{eq:delta3}
-\fpr{-ab}{p}\fpr{ap}{b}\fpr{bp}{a_0}
=\leg{-1}{|x|}\leg{2}{bp}^i\leg{2}{xz}^j
=(-1)^{\frac{|x|-1}{2}}(-1)^{\frac{p-1}{4}+\frac{b-1}{4}}\leg{2}{xz}^j.
\end{equation}
Take \eqref{eq:unit} modulo each prime $r\mid v$,
we have
$\leg{-1}{|v|}=1$,
so $|v|\equiv 1\bmod 4$.
Since 
\[
\fpr{-1}{p}=(-1)^{\frac{p-1}{4}},
\]
we have
\begin{equation}\label{eq:delta4}
\fpr{ab}{p}\fpr{ap}{b}\fpr{bp}{a_0}
=-(-1)^{\frac{|xv|-1}{2}}(-1)^{\frac{b-1}{4}}\leg{2}{xz}^j.
\end{equation}

When $d$ is odd, $j=0$, so we get \eqref{eq:totallyreal} by \eqref{eq:xv}.
When $d$ is even, $j=1$ and $a$ is even. 
Take \eqref{eq:2decom} modulo $16$ gives
$px^2\equiv bz^2\bmod 16$ if $y\equiv 0\bmod 4$, and $px^2\equiv 8+bz^2\bmod 16$ if $y\equiv 2\bmod 4$.
Then
\[\leg{2}{xz}=
(-1)^{\frac{(xz)^2-1}{8}}
=
\begin{cases}
\hfil 1&\text{ if } \hfil x^2\equiv z^2\bmod 16,\\
-1&\text{ if } x^2\equiv 9z^2\bmod 16
\end{cases}
=(-1)^{\frac{y}{2}}\fpr{pb}{2}.\]
From \eqref{eq:delta4} and since $p\equiv b\bmod 8$ here, we have
\[\fpr{ab}{p}\fpr{ap}{b}\fpr{bp}{a}
=-(-1)^{\frac{|xv|+y-1}{2}}(-1)^{\frac{p-1}{4}}.
\]
The choice $x-y\equiv 1\bmod 4$ in Lemma~\ref{lemma:unram}, together with \eqref{eq:uv}, implies
$(-1)^{\frac{xv+y-1}{2}}= (-1)^{\frac{p-1}{4}}$.
Therefore \eqref{eq:totallyreal} holds.
\end{proof}

\section{Proof of Theorem~\ref{thm2}}
Recall that we proved the cases $m= t-1$ and the upper bound in the second half of Theorem~\ref{thm2} at the end of Section~\ref{section:2rank}. It remains to prove the case $m = t-2$ of Theorem~\ref{thm2}. 
\subsection{Construction of a governing field}
We start by converting the criterion in Lemma~\ref{lemma:keycriterion} to a splitting condition in a suitable governing field.
If $p$ is a prime number congruent to $1$ modulo $4$, then we can write $p = \pi\overline{\pi}$ for some $\pi\equiv 1\bmod (1+\sqrt{-1})^3$ in $\ZZ[\sqrt{-1}]$; in this case, the inclusion $\ZZ\hookrightarrow \OO_{\QQ(\sqrt{-1})}$ induces an isomorphism $\ZZ/(p)\cong\OO_{\QQ(\sqrt{-1})}/(\pi)$, so that an integer $n$ is a fourth power modulo $p$ exactly when it is a fourth power modulo $\pi$. 
For each $1\leq i\leq t$, fix $\rho_i$ such that $q_i = \rho_i\overline{\rho_i}$ with $\rho_i\equiv 1\bmod (1+i)^3$ if $q_i\equiv 1\bmod 4$, and take $\rho_i=1+\sqrt{-1}$ if $q_i=2$.

Assuming $d=ab$ is odd for now, we have 
\begin{align*}
\fpr{d}{p}\fpr{bp}{a}\fpr{ap}{b} 
&= \fpr{d}{p}\cdot \prod_{q_i\mid a}\fpr{bp}{q_i}\cdot \prod_{q_j\mid b}\fpr{ap}{q_j} \\
&= \leg{d}{\pi}_{4}\prod_{\rho_i\mid a}\leg{bp}{\rho_i}_{4}\cdot \prod_{\rho_j\mid b}\leg{ap}{\rho_j}_{4} 
= \delta(a,b)\cdot \leg{d}{\pi}_{4}\cdot \prod_{\rho_k\mid d} \leg{p}{\rho_k}_{4},
\end{align*}
where
\begin{equation}\label{eq:defdelta}
\delta(a,b)
:= \prod_{\rho_i\mid a}\leg{b}{\rho_i}_{4}\cdot \prod_{\rho_j\mid b}\leg{a}{\rho_j}_{4}.
\end{equation}
Using quartic reciprocity as well as the fact that
\[
\leg{\rho_k}{\overline{\pi}}_{4} 
= \overline{\leg{\overline{\rho_k}}{\pi}}_{4} 
= \leg{\overline{\rho_k}}{\pi}_{4}^3,
\]
we have
\[
\leg{d}{\pi}_{4}\prod_{\rho_k\mid d} \leg{p}{\rho_k}_{4}
=\leg{d}{\pi}_{4}\prod_{\rho_k\mid d} \leg{\rho_k}{\pi}_{4}\leg{\overline{\rho_k}}{\pi}_{4}^3
= \leg{d}{\pi}_{2} \prod_{\rho_k\mid d} \leg{\overline{\rho_k}}{\pi}_{2} 
= \prod_{\rho_k\mid d}\leg{\rho_k}{\pi}_{2}.
\]

Now suppose $a$ is even, write $a=2a_0$. 
Define
\[\delta(a,b)=\delta(a_0,b)\cdot (-1)^{\frac{1-b}{8}}  \cdot \prod_{\rho_j\mid b}\leg{2}{\rho_j}_{4}.\]
Since $2=-\rho_i^2\sqrt{-1}$, we have
\begin{align*}
\fpr{d}{p}\fpr{bp}{a}\fpr{ap}{b} 
&= \leg{2}{\pi}_{4}\fpr{bp}{2}
\leg{a_0b}{\pi}_{4}
\prod_{\rho_k\mid a_0b}\leg{p}{\rho_k}_{4} \cdot
\prod_{\rho_i\mid a_0}\leg{b}{\rho_i}_{4}\cdot
\prod_{\rho_j\mid b}\leg{2a_0}{\rho_j}_{4} 
\\ 
&= \delta(a,b)\cdot (-1)^{\frac{b-1}{8}}
\leg{-\sqrt{-1}}{\pi}_{4}\fpr{bp}{2}\cdot 
\prod_{\rho_k\mid d}\leg{\rho_k}{\pi}_{2}
\end{align*}
Note that the assumption that $a$ is even and $(ap,bp)_2=1$ implies $p\equiv b\bmod 8$. 
Consider the possible classes of $\pi$ in $\ZZ[\sqrt{-1}]/8\ZZ[\sqrt{-1}]$ as in the proof of \cite[Proposition 7]{FouvryKluners}, which are 
\[\begin{cases}
1,\ 1+4i&\text{ if }p\equiv 1\bmod 16,\\
7+6i,\ 7+2i&\text{ if }p\equiv 5\bmod 16,\\
5,\ 5+4i&\text{ if }p\equiv 9\bmod 16,\\
3+6i,\ 3+2i&\text{ if }p\equiv 13\bmod 16.
\end{cases}
\]
Then
\[\leg{-\sqrt{-1}}{\pi}_{4}=(-1)^{\frac{1-p}{8}}\qquad
\text{ and }\qquad
\fpr{bp}{2}=(-1)^{\frac{p-b}{8}}\]
Therefore $(-1)^{\frac{b-1}{8}}
\leg{-\sqrt{-1}}{\pi}_{4}\fpr{bp}{2}=1$.

In either case we have
\[
\fpr{d}{p}\fpr{bp}{a}\fpr{ap}{b} 
= \delta(a,b)\cdot \prod_{\rho_k\mid d} \leg{\rho_k}{\pi}_{2}.
\]
We let
\begin{equation}\label{eq:defM4}
\sM_{4; d} 
=
\K_{-1, d}(\sqrt{\rho_1\cdots\rho_t}).
\end{equation}
Then
\[\fpr{d}{p}\fpr{bp}{a}\fpr{ap}{b}=\delta(a,b)\]
if and only if $\pi$ splits in $\sM_{4; d}/\K_{-1}$, if and only if $p$ splits completely in $\sM_{4; d}/\QQ$.

\subsection{Computation of densities}

Recall that $d = q_1\cdots q_t$ with $q_i\not\equiv 3\bmod 4$ distinct primes, that $\rk_2\CL(d) = \rk_2\CL^+(d)$, that $\rk_4\CL^+(d) = 0$, and that $\sP_{d, t-2}$ is the set of prime numbers $p$ such that 
\begin{itemize}
\item $p\equiv 1\bmod 4$,
\item $p\nmid d$,
\item $\rk_4\CL^+(dp) = 0$, and
\item there are exactly $t-2$ indices $i\in\{1, \ldots, t\}$ such that $\leg{q_i}{p} = 1$.
\end{itemize}
For each subset $\Omega\subset\{1, \ldots, t\}$ of cardinality $t-2$, let $\sP_{d, t-2, \Omega}$ denote the set of $p\in\sP_{d, t-2}$ such that $\leg{q_i}{p} = 1$ if and only if $i\in \Omega$. Hence
$$
\sP_{d, t-2} = \bigcup_{\substack{\Omega\subset \{1, \ldots, t\}\\ |\Omega| = t-2}}\sP_{d, t-2, \Omega}.
$$
If $\Omega_1$ and $\Omega_2$ are two distinct subsets of $\{1, \ldots, t\}$ of cardinality $t-2$, then there exists $i\in\Omega_1\setminus\Omega_2$, and so every prime $p\in\Omega_2$ satisfies $\leg{q_i}{p} = -1$, which means that $p\notin\Omega_1$. Hence the union above is disjoint, and so, to prove Theorem~\ref{thm2}, it suffices to prove for each $\Omega$ that the map
$$
\sP_{d, t-2, \Omega}\rightarrow \{1, 2\},\qquad p\mapsto Q(\K_{d, p})
$$
is Frobenian, with governing field $\sM_{\Omega}$, say, and that
\[
\lim_{X\rightarrow\infty}\frac{|\{p\in\sP_{d, m, \Omega}:\ p\leq X,\ Q(\K_{d, p}) = 2\}|}{|\{p\in\sP_{d, m, \Omega}:\ p\leq X\}|} = \frac{1}{2^{t-1}},
\] 
whenever $\sP_{d, t-2, \Omega}$ is non-empty. 
Then one can take the compositum $\sM = \prod_{\Omega}\sM_{\Omega}$ as a governing field for the map $\sP_{d, t-2}\rightarrow \{1, 2\}$ given by $p\mapsto Q(\K_{d, p})$.
If $\sP_{d, t-2, \Omega}$ is the empty set, then we may take $\sM_{\Omega} = \QQ$.
Otherwise, by re-numbering the indices, we may assume without loss of generality that $\Omega = \{1, \ldots, t-2\}$.

First, if $p\in\sP_{d, t-2,\Omega}$, then $\leg{q_{t-1}}{p} = \leg{q_t}{p} = -1$, so $p$ splits completely in 
$$
\sE=\K_{-1, q_1,\dots,q_{t-2},q_{t-1}q_t}.
$$
Conversely, any prime $p$ that splits completely in $\sE$ but not in
$$
\sL=\sE \K_{q_t} = \K_{-1,q_1,\dots,q_t}
$$
belongs to $\sP_{d, t-2, \Omega}$. 
Hence, letting $\sigma$ denote the element in $\Gal(L/\QQ)$ that fixes $\sqrt{-1}$ and $\sqrt{q_i}$ for $1\leq i\leq t-2$ and that sends $\sqrt{q_i}$ to $-\sqrt{q_i}$ for $i=t-1,\ t$ (i.e., $\sigma$ is the non-trivial element of $\Gal(\sL/\sE)$), we see that $p\in \sP_{d, t-2, \Omega}$ if and only if $\Frob_{\sL/\QQ}(p) = \sigma$.

Next, note that Lemma~\ref{lemma:quadraticform} yields the same decomposition $a$, $b = d/a$ for $\K_{d, p_1}$ and $\K_{d, p_2}$ for any two primes $p_1, p_2\in\sP_{d, t-2, \Omega}$. Also note that $\K_{-1, d}\subset \sE$, so, for primes $p$ that split completely in $\sE$, the final result of the previous section can be restated as 
\begin{equation}\label{eq:newM4}
\fpr{d}{p}\fpr{bp}{a}\fpr{ap}{b}=\delta(a,b)\Longleftrightarrow p\text{ splits completely in }\sE\sM_{4; d}/\QQ,
\end{equation}
where $\sM_{4; d}$ is as in \eqref{eq:defM4}.
Hence, by Lemma~\ref{lemma:keycriterion} and the result of the previous section, a prime $p$ is in $\sP_{d, t-2, \Omega}$ and $H^+_{d, p}$ is totally real if and only if 
\begin{itemize}
\item $\Frob_{\sL/\QQ}(p) = \sigma$,
\item $p$ splits completely in $\sE\sM_{2; d} /\QQ$, where $\sM_{2; d}$ is as in \eqref{eq:M2}, and
\item identifying $\Gal(\sE\sM_{4; d}/\sE)$ with the group $\{\pm 1\}$, and viewing $\Gal(\sE\sM_{4; d}/\sE)$ as a subgroup of $\Gal(\sE\sM_{4; d}/\QQ)$ in the canonical way, $\Frob_{\sE\sM_{4; d}/\QQ}(p) = -\delta(a, b)$.
\end{itemize}
Define $\sM$ to be the compositum 
$$
\sM = \sL\sM_{2; d}\sM_{4; d} = \K_{-1, q_1,\dots,q_t}(\sqrt{\rho_1}, \ldots, \sqrt{\rho_{t-2}},\sqrt{\rho_1\cdots\rho_t}),
$$
and observe that there is a unique element $\tau(a, b)\in\Gal(\sM/\sE) \subset \Gal(\sM/\QQ)$ depending on $a$ and $b$ such that for every prime $p$, the three conditions listed above are equivalent to the condition that $\Frob_{\sM/\sE}(\pp) = \tau(a, b)$, where $\pp$ is any prime of $\sE$ lying above $p$. 
Note also that $\Gal(\sM/\sE)\cong C_2^{t}$. 
Applying the Chebotarev Density Theorem to $\sM/\sE$ and $\sL/\sE$, we get
\begin{align*}
\lim_{X\rightarrow\infty}&\frac{|\{p\in\sP_{d, t-2, \Omega}:\ p\leq X,\ Q(\K_{d, p}) = 2\}|}{|\{p\in\sP_{d, t-2, \Omega}:\ p\leq X\}|} \\
& = \lim_{X\rightarrow\infty}\frac{|\{p\in\sP_{d, t-2, \Omega}:\ p\leq X,\ H^+_{d, p}\text{ is totally real}\}|}{|\{p\in\sP_{d, t-2, \Omega}:\ p\leq X\}|} \\
 & = \lim_{X\rightarrow\infty}\frac{\#\{\pp\text{ prime in }\OO_{\sE}:\ \Norm\pp\leq X,\ \Frob_{\sM/\sE}(\pp)=\tau(a, b)\}}{\#\{\pp\text{ prime in }\OO_{\sE}:\ \Norm\pp\leq X,\ \Frob_{\sL/\sE}(\pp)=\sigma\}} \\
 & = \frac{2^{-t}}{2^{-1}} = 2^{-t+1},
\end{align*}
as desired.

\begin{figure}
\centering
 \begin{tikzpicture}[node distance = 1cm, auto]
      \node (Q) {$\QQ$};
      \node (E) [above of=Q] {$\sE=\K_{-1, q_1,\dots,q_{t-2},q_{t-1}q_t}$};
      \node (O) [above of=E] {$\sL=\K_{-1, q_1,\dots,q_t}$};
      \node (M) [above of=O] {$\sM=\K_{-1, q_1,\dots,q_t}(\sqrt{\rho_1}, \ldots, \sqrt{\rho_{t-2}},\sqrt{\rho_1\cdots\rho_t})$};
      \draw[-] (Q) to node {} (E);
      \draw[-] (E) to node {} (O);
      \draw[-] (O) to node {} (M);
\end{tikzpicture}
\caption{Field diagram of the fields $\sE\subset\sL\subset\sM$.}
\end{figure}

\bibliographystyle{abbrv}
\bibliography{Chan_Milovic_References_UG}

\begin{thebibliography}{10}

\bibitem{AM}
A.~Azizi and A.~Mouhib.
\newblock Sur le rang du 2-groupe de classes de {$\mathbb Q(\sqrt m,\sqrt d)$}
  o\`u {$m=2$} ou un premier {$p\equiv 1\pmod 4$}.
\newblock {\em Trans. Amer. Math. Soc.}, 353(7):2741--2752, 2001.

\bibitem{BLS}
E.~Benjamin, F.~Lemmermeyer, and C.~Snyder.
\newblock On the unit group of some multiquadratic number fields.
\newblock {\em Pacific J. Math.}, 230(1):27--40, 2007.

\bibitem{CohnLag}
H.~Cohn and J.~C. Lagarias.
\newblock On the existence of fields governing the {$2$}-invariants of the
  classgroup of {${\bf Q}(\sqrt{dp})$} as {$p$} varies.
\newblock {\em Math. Comp.}, 41(164):711--730, 1983.

\bibitem{CohnLag2}
H.~Cohn and J.~C. Lagarias.
\newblock Is there a density for the set of primes {$p$} such that the class
  number of {${\bf Q}(\sqrt{-p})$} is divisible by {$16$}?
\newblock In {\em Topics in classical number theory, {V}ol. {I}, {II}
  ({B}udapest, 1981)}, volume~34 of {\em Colloq. Math. Soc. J\'anos Bolyai},
  pages 257--280. North-Holland, Amsterdam, 1984.

\bibitem{FouvryKluners}
E.~Fouvry and J.~Kl\"uners.
\newblock On the negative {P}ell equation.
\newblock {\em Ann. of Math. (2)}, 172(3):2035--2104, 2010.

\bibitem{Hecke}
E.~Hecke.
\newblock {\em Lectures on the theory of algebraic numbers}, volume~77 of {\em
  Graduate Texts in Mathematics}.
\newblock Springer-Verlag, New York-Berlin, 1981.
\newblock Translated from the German by George U. Brauer, Jay R. Goldman and R.
  Kotzen.

\bibitem{IrelandRosen}
K.~Ireland and M.~Rosen.
\newblock {\em A classical introduction to modern number theory}, volume~84 of
  {\em Graduate Texts in Mathematics}.
\newblock Springer-Verlag, New York, second edition, 1990.

\bibitem{Kubota53}
T.~Kubota.
\newblock \"{U}ber die {B}eziehung der {K}lassenzahlen der {U}nterk\"{o}rper
  des bizyklischen biquadratischen {Z}ahlk\"{o}rpers.
\newblock {\em Nagoya Math. J.}, 6:119--127, 1953.

\bibitem{Kubota56}
T.~Kubota.
\newblock \"{U}ber den bizyklischen biquadratischen {Z}ahlk\"{o}rper.
\newblock {\em Nagoya Math. J.}, 10:65--85, 1956.

\bibitem{Kuroda}
S.~Kuroda.
\newblock \"{U}ber die {K}lassenzahlen algebraischer {Z}ahlk\"{o}rper.
\newblock {\em Nagoya Math. J.}, 1:1--10, 1950.

\bibitem{Lang}
S.~Lang.
\newblock {\em Algebra}, volume 211 of {\em Graduate Texts in Mathematics}.
\newblock Springer-Verlag, New York, third edition, 2002.

\bibitem{YZ}
Y.~Ouyang and Z.~Zhang.
\newblock Hilbert genus fields of real biquadratic fields.
\newblock {\em Ramanujan J.}, 37(2):345--363, 2015.

\bibitem{RedeiMatrix}
L.~R\'edei.
\newblock Arithmetischer {B}eweis des {S}atzes \"uber die {A}nzahl der durch
  vier teilbaren {I}nvarianten der absoluten {K}lassengruppe im quadratischen
  {Z}ahlk\"orper.
\newblock {\em J. Reine Angew. Math.}, 171:55--60, 1934.

\bibitem{Sime1}
P.~J. Sime.
\newblock Hilbert class fields of real biquadratic fields.
\newblock {\em J. Number Theory}, 50(1):154--166, 1995.

\bibitem{Sime2}
P.~J. Sime.
\newblock On the ideal class group of real biquadratic fields.
\newblock {\em Trans. Amer. Math. Soc.}, 347(12):4855--4876, 1995.

\bibitem{Ste1}
P.~Stevenhagen.
\newblock Ray class groups and governing fields.
\newblock In {\em Th\'eorie des nombres, {A}nn\'ee 1988/89, {F}asc.\ 1}, Publ.
  Math. Fac. Sci. Besan\c con, page~93. Univ. Franche-Comt\'e, Besan\c con,
  1989.

\bibitem{Yue1}
Q.~Yue.
\newblock The generalized {R}\'{e}dei-matrix.
\newblock {\em Math. Z.}, 261(1):23--37, 2009.

\bibitem{Yue2}
Q.~Yue.
\newblock Genus fields of real biquadratic fields.
\newblock {\em Ramanujan J.}, 21(1):17--25, 2010.

\end{thebibliography}

\end{document}